 \theoremstyle{plain}
\newtheorem{thm}{Theorem}[section]
  \theoremstyle{remark}
  \newtheorem{rem}[thm]{Remark}
  \theoremstyle{plain}
  \newtheorem{prop}[thm]{Proposition}
  \theoremstyle{plain}
  \newtheorem{cor}[thm]{Corollary}
 \theoremstyle{definition}
  \newtheorem{example}[thm]{Example}
  \theoremstyle{plain}
  \newtheorem{lem}[thm]{Lemma}
  \newtheorem*{acknowledgement*}{Acknowledgement}
\numberwithin{equation}{section}
\begin{document}

\title{the central limit theorem for monotone convolution with
applications to free l\'{e}vy processes and infinite ergodic theory}

\author{jiun-chau wang}

\date{July 14, 2012; revised on February 27, 2013}

\begin{abstract}
Using free harmonic analysis and the theory of regular variation, we
show that the monotonic strict domain of attraction for the standard
arc-sine law coincides with the classical one for the standard
normal law. This leads to the most general form of the monotonic
central limit theorem and a complete description for the asymptotics
of the norming constants. These results imply that the L\'{e}vy
measure for a centered free L\'{e}vy process of the second kind
cannot have a slowly varying truncated variance. In particular, the
second kind free L\'{e}vy processes with zero means and finite
variances do not exist. Finally, the method of proofs allows us to
construct a new class of conservative ergodic measure preserving
transformations on the real line $\mathbb{R}$ equipped with Lebesgue
measure, showing an unexpected connection between free analysis and
infinite ergodic theory.
\end{abstract}

\address{Department of Mathematics and Statistics, University of Saskatchewan,
Saskatoon, Saskatchewan S7N 5E6, Canada}

\email{jcwang@math.usask.ca}

\subjclass[2000]{Primary: 46L54, 46L53; Secondary: 60F05, 28D05.}

\keywords{Monotone convolution; free convolution; central limit
theorem; free L\'{e}vy process; infinite ergodic theory; inner
functions}

\maketitle

\section{Introduction}

Denote by $\mathbb{C}^{+}=\{ z\in\mathbb{C}:\Im z>0\}$ the complex
upper half-plane, and let
$F:\mathbb{C}^{+}\rightarrow\mathbb{C}^{+}$ be an analytic map with
$F(iy)/iy\rightarrow1$ as $y\rightarrow\infty$. This paper aims to
investigate the convergence properties of the \emph{}iterations
\emph{}$F\circ F\circ\cdots\circ F$ through free probability tools
and Karamata's theory of regular variation.

The probabilistic framework for these iterations is built upon the
theory of monotone convolution $\triangleright$, which was
introduced by Muraki in \cite{Murakipreprint,MurakiCLT} according to
his notion of monotonic independence. Denote by $\mathcal{M}$ the
set of all Borel probability measures on $\mathbb{R}$. The
convolution $\triangleright$ is an associative binary operation on
$\mathcal{M}$ that corresponds to the addition of monotonically
independent self-adjoint random variables. The monotonic
independence is one of the five natural notions of independence in
noncommutative probability
\cite{Schurmann,BenGhoralSchurmann,Speicher,MurakiFiveInd}, and
hence the corresponding monotone convolution becomes a fundamental
object in this theory. The connection between the iteration of the
function $F$ and monotone convolution is that there exists a unique
measure $\mu\in\mathcal{M}$ such that the $n$-fold iteration
$F^{\circ n}=F\circ F\circ\cdots\circ F$ of $F$ is precisely the
reciprocal Cauchy transform of the $n$-th monotone convolution power
$\mu^{\triangleright
n}=\mu\triangleright\mu\triangleright\cdots\triangleright\mu$ of
$\mu$ for $n\geq1$(see Section 2).

In addition, monotone convolution also appears in the context of
free probability theory. Indeed, by the subordination results of
Biane and Voiculescu \cite{Biane,VoiCoalgebra}, for any measures
$\rho,\tau\in\mathcal{M}$ there exist unique measures $\sigma_{1}$
and $\sigma_{2}$ in $\mathcal{M}$ such that the free convolution
$\rho\boxplus\tau=\rho\triangleright\sigma_{1}=\tau\triangleright\sigma_{2}$.
The measures $\sigma_{1}$ and $\sigma_{2}$ are interpreted in
\cite{Biane} as the Markov transitions for additive processes with
free increments, and from this perspective Biane further introduced
two natural classes of free L\'{e}vy processes: the free additive
processes with stationary increment laws (the first kind) and the
ones with stationary transition probabilities (the second kind).
These two types of stationarity condition are not equivalent for
free processes (see \cite{Biane}).

The contribution of this paper is two-fold: noncommutative and
classical aspects. First, on the noncommutative side, we examine the
weak convergence of the measures \[
\underbrace{D_{1/B_{n}}\mu\triangleright
D_{1/B_{n}}\mu\triangleright\cdots\triangleright
D_{1/B_{n}}\mu}_{n\:\text{times}},\] where $D_{1/B_{n}}\mu$ is the
dilation of $\mu$ by a factor of $B_{n}^{-1}>0$. This corresponds to
the uniform convergence of the functions $B_{n}^{-1}F^{\circ
n}\left(B_{n}z\right)$ on compact subsets of $\mathbb{C}^{+}$. This
pattern of convergence has been considered in our previous work
\cite{JCLevy}, where we initiated the investigation of strict
domains of attraction relative to $\triangleright$. We proved in
\cite{JCLevy} that a law has a non-empty strict domain of attraction
if and only if it is strictly stable. The current paper contributes
to this study by characterizing the strict domain of attraction for
a particular strictly stable law; namely, the standard arc-sine law
$\gamma$ whose density is $\pi^{-1}(2-x^{2})^{-1/2}$ on the interval
$(-\sqrt{2},\sqrt{2})$. Here we discover that the monotonic strict
domain of attraction of $\gamma$ coincides with the classical strict
domain of attraction of the standard normal law $\mathcal{N}$
(Theorem 3.1). As a consequence, the monotonic central limit theorem
(CLT) is equivalent to the classical CLT or to the free CLT.

In the same vein, we show that our CLT result can be applied to the
study of free L\'{e}vy processes of the second kind (for short,
FLP2). The second kind processes are less studied than are the first
kind in the literature, mostly because their existence is hard to
establish. (The only known examples of FLP2 to date are Biane's
$\triangleright$-strictly stable ones in \cite{Biane}.) In
particular, the complete description for the L\'{e}vy measure
associated to a FLP2, a question due to Biane \cite[Section
4.7]{Biane}, is still not available at this point. Following Biane's
question, we show in this paper that the L\'{e}vy measure and every marginal law of a FLP2 cannot have slowly varying truncated
variances, when one of the marginal laws is centered or does not
have a finite mean (Theorem 3.7). This implies that it is not
possible to construct a FLP2 with zero expectations and finite
variances. For general processes with non-zero means, we have a
law of large numbers (Theorem 3.6).

Secondly, on the classical side, we address the applications of our
methods to infinite ergodic theory for inner functions on
$\mathbb{C}^{+}$. When the measure $\mu$ is singular relative to
Lebesgue measure $\lambda$ on $\mathbb{R}$, the function $F$ is
inner and its boundary restriction
\[
T(x)=\lim_{y\rightarrow0^{+}}F(x+iy)\in\mathbb{R}\] is a measure
preserving transformation on the measure space
$(\mathbb{R},\mathcal{B},\lambda)$, where $\mathcal{B}$ is the
$\sigma$-field of Borel measurable subsets of $\mathbb{R}$. A
famous example of this type is Boole's transformation:
$T(x)=x-x^{-1}$ on $\mathbb{R}$. The main result here is a simple condition for the conservativity of $T$ in probabilistic
terms (Theorem 3.9), which says that if $X_{1},X_{2},\cdots$ are
i.i.d. according to $\mu$ and $B_{n}^{-1}(X_{1}+X_{2}+\cdots+X_{n})$
tend weakly to the standard Gaussian with
$\sum_{n=1}^{\infty}B_{n}^{-2}=\infty$, then the transformation $T$
is conservative. This means that for any
$A\in\mathcal{B}$ with $\lambda(A)>0$, almost all points of $A$ will eventually return
to $A$ under the iteration of $T$; or, in the sense of
Poincar\'{e}'s recurrence theorem, that the relation
$\liminf_{n\rightarrow\infty}d(f(x),f(T^{\circ n}(x)))=0$ holds for
almost all $x\in \mathbb{R}$ and for any measurable $f$ taking
values in a separable metric space $(Y,d)$. Moreover, various
ergodic theorems now hold for the map $T$; for example, by
Hopf's ratio ergodic theorem, we obtain the $\lambda$-almost
everywhere convergence
\[ \lim_{n\rightarrow\infty}\frac{\sum_{j=0}^{n}f\circ T^{\circ
j}(x)}{\sum_{j=0}^{n}g\circ T^{\circ j}(x)}=\frac{\int_{X}f\,
d\lambda}{\int_{X}g\, d\lambda},\] whenever $f,g\in L^{1}(\lambda)$
and $g>0$ (cf. \cite{A}). Thus, we get an a.e. convergence result for the iteration
of $F$ on the boundary $\mathbb{R}$. Using Theorem 3.9, we also
construct a new class of conservative and ergodic measure preserving
transformations on $\mathbb{R}$, extending an old work of Aaronson
\cite{A1}. (See Example 3.10.)

Finally, we would like to comment on the methods used in our proofs.
The monotonic CLT for bounded summands was first shown by Muraki in \cite{MurakiCLT}, with a combinatorial proof (see also
\cite{Saigo}). Our results go beyond the case of bounded variables
and can treat variables with infinite variance. The proofs rely
solely on the free harmonic analysis tools as in
\cite{BVunbdd,PataCLT} and the theory of regular variation
\cite{BGT}, making no use of combinatorial methods. The key
ingredient here is a connection between the asymptotic behavior of
the norming constants $B_{n}$ and that of
the measure $\sigma$ appeared in the Nevanlinna form of the function
$F$ (see \eqref{eq:2.3}). Indeed, this consideration also plays an important role in our
construction of new conservative transformations. We would also like to mention that Anshelevich and Williams have recently proved a remarkable result on the equivalence between monotone and
Boolean limit theorems in \cite{AW}. Their technique relies on the Chernoff
product formula, which is different from the approach undertaken in
this paper. Here the equivalence between the classical and the
monotonic CLT's is proved directly, without any reference to
the Boolean theory.

This paper is organized into four sections. After collecting some
preliminary material in Section 2, we state our main results in
Section 3 and present their proofs in the last section.

\section{Setting and Basic Properties }

\subsection{Monotone convolution }

As shown by Franz in \cite{Franz}, given two measures
$\mu,\nu\in\mathcal{M}$ one can find two monotonically independent
random variables $X$ and $Y$ distributed according to $\mu$ and
$\nu$, respectively. The \emph{monotone convolution}
$\mu\triangleright\nu$ of the measures $\mu$ and $\nu$ is defined as
the distribution of $X+Y$. In the same paper, Franz also proved that
the definition of the measure $\mu\triangleright\nu$ does not depend on
the particular realization of the variables $X$ and $Y$. (We refer to \cite{Franz} for the
details of this construction.)

The calculation of monotone convolution of measures requires certain integral transforms, which we now review. First, the \emph{Cauchy transform} of a measure $\mu\in\mathcal{M}$ is
defined as \[ G_{\mu}(z)=\int_{-\infty}^{\infty}\frac{1}{z-t}\,
d\mu(t),\qquad z\in\mathbb{C}^{+},\] so that the reciprocal Cauchy
transform $F_{\mu}=1/G_{\mu}$ is an analytic self-map of
$\mathbb{C}^{+}$. The measure $\mu$ is completely determined by its
Cauchy transform $G_{\mu}$, and hence by the function $F_{\mu}$.
Given two measures $\mu,\nu\in\mathcal{M}$, it was shown in
\cite{Franz} that \[
F_{\mu\triangleright\nu}(z)=F_{\mu}\left(F_{\nu}(z)\right),\qquad
z\in\mathbb{C}^{+}.\] (See also \cite{BerMono} for measures with
bounded support.)

Let $j$ be a nonnegative integer. We write $\mu^{\triangleright j}$
for the \emph{$j$-th monotone convolution power
$\mu\triangleright\mu\triangleright\cdots\triangleright\mu$} of a
measure $\mu\in\mathcal{M}$, with
$\mu^{\triangleright0}=\delta_{0}$. Here the notation $\delta_{c}$
denotes the Dirac point mass in a point $c\in\mathbb{R}$.
Analogously, if $F$ is a map from a non-empty set $A$ into itself
then the notation $F^{\circ j}$ denotes its \emph{$j$-fold iterate}
$F\circ F\circ\cdots\circ F$, where the case $j=0$ means the
identity function on $A$.

For $\mu\in\mathcal{M}$, we denote by $D_{b}\mu$ the \emph{dilation}
of the measure $\mu$ by a factor $b>0$, that is,
$D_{b}\mu(A)=\mu(b^{-1}A)$ for all Borel subsets
$A\subset\mathbb{R}$. At the level of reciprocal Cauchy transforms,
this means that \[ F_{D_{b}\mu}(z)=bF_{\mu}(z/b),\qquad
z\in\mathbb{C}^{+}.\] Note that we have
$D_{b}(\mu\triangleright\nu)=D_{b}\mu\triangleright D_{b}\nu$ for
any $\mu,\nu\in\mathcal{M}$.

Given probability measures $\{\mu_{n}\}_{n=1}^{\infty}$ and $\mu$ on
$\mathbb{R}$, we say that $\mu_{n}$ \emph{converges} \emph{weakly}
to $\mu$, written as $\mu_{n}\Rightarrow\mu$, if \[
\lim_{n\rightarrow\infty}\int_{-\infty}^{\infty}f(t)\,
d\mu_{n}(t)=\int_{-\infty}^{\infty}f(t)\, d\mu(t)\] for every bounded, continuous real function $f$ on $\mathbb{R}$. Note that the weak convergence
$\mu_{n}\Rightarrow\mu$ holds if and only if the relation
$\lim_{n\rightarrow\infty}F_{\mu_{n}}(z)=F_{\mu}(z)$ holds for every
$z$ in $\mathbb{C}^{+}$ (cf. \cite{Geronimo}). Thus, if both
$\mu_{n}\Rightarrow\mu$ and $\nu_{n}\Rightarrow\nu$ hold, then one
has $\mu_{n}\triangleright\nu_{n}\Rightarrow\mu\triangleright\nu$.

The weak convergence of measures needs tightness. Recall that a
family $\mathcal{F}$ of positive Borel measures on $\mathbb{R}$ is
\emph{tight} if
\[
\lim_{y\rightarrow+\infty}\sup_{\mu\in\mathcal{F}}\mu(\{
t\in\mathbb{R}:\,\left|t\right|>y\})=0.\] We shall also mention that
the tightness for probability measures can be characterized through the asymptotics of their reciprocal Cauchy transforms. More
precisely, a family $\mathcal{F}\subset\mathcal{M}$ is tight if and
only if \begin{equation} F_{\mu}(iy)=iy(1+o(1)),\qquad
y>0,\label{eq:2.2}\end{equation} uniformly for $\mu\in\mathcal{F}$
as $y\rightarrow\infty$ (see \cite{BVunbdd} for the proof).

\subsection{Functions of slow variation}

Recall from \cite{BGT} that a positive Borel function $f$ on
$(0,\infty)$ is said to be \emph{regularly varying} if for every
constant $c>0$, one has \[
\lim_{x\rightarrow\infty}\frac{f(cx)}{f(x)}=c^{d}\] for some
$d\in\mathbb{R}$ ($d$ is called the \emph{index} \emph{of regular
variation}). A regularly varying function with index zero is said to
be \emph{slowly varying}. The notation $R_{d}$ denotes the class of
regularly varying functions with index $d$. 

The following properties of the class $R_{0}$ are important for our investigation. (See  the book \cite{BGT} for proofs.)

\begin{enumerate}
\item [(\textbf{P1}).] (Representation Theorem) A function $f:(0,\infty)\rightarrow(0,\infty)$
belongs to $R_{0}$ if and only if it is of the form \[
f(x)=c(x)\exp\left(\int_{1}^{x}\frac{\varepsilon(t)}{t}\, dt\right),\qquad x\geq1,\]
where $c(x)$ and $\varepsilon(x)$ are measurable and $c(x)\rightarrow c\in(0,\infty)$,
$\varepsilon(x)\rightarrow0$ as $x\rightarrow\infty$.
\item [(\textbf{P2}).] \textbf{}If $f\in R_{0}$, then for all $\varepsilon>0$
one has \[
\lim_{x\rightarrow\infty}x^{\varepsilon}f(x)=\infty\quad\text{and}\quad\lim_{x\rightarrow\infty}x^{-\varepsilon}f(x)=0.\]

\item [(\textbf{P3}).] \textbf{}If $f\in R_{0}$, then there is a positive sequence
$\{ B_{n}\}_{n=1}^{\infty}$ such that $\lim_{n\rightarrow\infty}B_{n}=\infty$
and the limit \[
\lim_{n\rightarrow\infty}nB_{n}^{-2}f(B_{n}x)=1\]
holds for each $x>0$.
\item [(\textbf{P4}).] (Monotone Equivalents) If $f\in R_{0}$ and $d>0$,
then there exists a non-decreasing function
$g:(0,\infty)\rightarrow(0,\infty)$ with $x^{d}f(x)\sim g(x)$ as
$x\rightarrow\infty$, that is,
$\lim_{x\rightarrow\infty}x^{d}f(x)/g(x)=1$.
\end{enumerate}

For a finite positive Borel measure $\mu$ on $\mathbb{R}$, we
introduce the functions
$H_{\mu},L_{\mu}:[0,\infty)\rightarrow[0,\infty)$ by
\[
H_{\mu}(x)=\int_{-x}^{x}t^{2}\, d\mu(t)\quad\text{and}\quad L_{\mu}(x)=\int_{-\infty}^{\infty}\frac{t^{2}x^{2}}{t^{2}+x^{2}}\, d\mu(t).\]
The mean and the second moment of $\mu$ are defined in the usual
way:\[
m(\mu)=\int_{-\infty}^{\infty}t\: d\mu(t)\quad\text{and}\quad m_{2}(\mu)=\int_{-\infty}^{\infty}t^{2}\: d\mu(t),\]
provided that the above integrals converge absolutely. (We also use
the somewhat abused notation $m(\mu)=\infty$ or $m_{2}(\mu)=\infty$
to indicate the divergence of these integrals.) The variance of $\mu$
will be written as $\text{var}(\mu)$.

Note that both $H_{\mu}$ and $L_{\mu}$ are continuous and
non-decreasing functions. Also, we have that
$H_{\mu}(x),L_{\mu}(x)>0$ for $x$ large enough if and only if
$\mu\neq r\delta_{0}$, $r\geq0$. Moreover, the functions
$H_{\mu}(x)$ and $L_{\mu}(x)$ are bounded if and only if the second
moment $m_{2}(\mu)$ exists, and in this case both functions tend to
$m_{2}(\mu)$ as $x\rightarrow\infty$. It is also known that if
$H_{\mu}$ varies slowly, then the mean $m(\mu)$ exists (see
\cite{Feller}, Section VIII.9.). We should also mention that
$H_{\mu}\in R_{0}$ if and only if $L_{\mu}\in R_{0}$, and in this case
we have $H_{\mu}(x)\sim L_{\mu}(x)$ as $x\rightarrow\infty$ (see
Proposition 3.3 in \cite{PataCLT}).

Every analytic map from $\mathbb{C}^{+}$ to
$\mathbb{C}^{+}\cup\mathbb{R}$ has a unique \emph{Nevanlinna
representation}  \cite[Theorem 6.2.1]{A}. In particular, the
reciprocal Cauchy transform $F_{\mu}$ of a measure
$\mu\in\mathcal{M}$ can be written as:
\begin{equation}
F_{\mu}(z)=z+a+\int_{-\infty}^{\infty}\frac{1+tz}{t-z}\,
d\sigma(t),\qquad z\in\mathbb{C}^{+},\label{eq:2.3}\end{equation}
where $a\in\mathbb{R}$ and $\sigma$ is a finite, positive Borel
measure on $\mathbb{R}$. This integral formula implies that $\Im
F_{\mu}(z)\geq\Im z$. Moreover, this inequality is strict for every
$z\in\mathbb{C}^{+}$ unless the measure $\mu$ is \emph{degenerate},
i.e., $\mu$ is a point mass. It is easy to verify that the monotone
convolution $\mu\triangleright\nu$ is always \emph{nondegenerate} if
$\mu$ or $\nu$ is not degenerate. Also, for an analytic map
$F:\mathbb{C}^{+}\rightarrow\mathbb{C}^{+}$ with the property
$\lim_{y\rightarrow\infty}F(iy)/iy=1$, the Nevanlinna form of $-1/F$
shows that the function $F$ is of the form $F=F_{\mu}$ for a unique
$\mu\in\mathcal{M}$.

The following result is a summary of Lemma 3.5 and Propositions 3.6
and 3.7 in \cite{PataCLT}, and it will be used repeatedly in this paper.

\begin{prop}
\cite{PataCLT} Let $\mu$ be a nondegenerate probability measure
on $\mathbb{R}$ whose reciprocal Cauchy transform $F_{\mu}$ is given
by \eqref{eq:2.3}. Then:
\begin{enumerate}
\item $H_{\mu}\in R_{0}$ if and only if $L_{\sigma}\in R_{0}$ or $L_{\sigma}(x)=0$
for every $x\geq0$. In this case we have the mean $m(\mu)=m(\sigma)-a$
and \[
H_{\mu}(x)-m(\mu)^{2}\sim L_{\sigma}(x)+\sigma(\mathbb{R})\qquad(x\rightarrow\infty).\]

\item If $L_{\sigma}\in R_{0}$, then it follows that \[
\lim_{x\rightarrow\infty}\frac{1}{L_{\sigma}(x)}\int_{-\infty}^{\infty}\frac{\left|t\right|^{3}x}{t^{2}+x^{2}}\, d\sigma(t)=0.\]

\item The measure $\mu$ has finite variance if and only if the measure
$\sigma$ does. In this case the variance of $\mu$ is equal to $m_{2}(\sigma)+\sigma(\mathbb{R})$.
\end{enumerate}
\end{prop}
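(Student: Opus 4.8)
The plan is to read off all three assertions from the single real function
\[
\phi(y):=y\bigl(\Im F_\mu(iy)-y\bigr),\qquad y>0,
\]
by computing $\phi$ twice — once from the Nevanlinna data $(a,\sigma)$ and once from $\mu$ — and then comparing. From \eqref{eq:2.3}, the identity $\Im\dfrac{1+t(iy)}{t-iy}=\dfrac{y(1+t^{2})}{t^{2}+y^{2}}$ gives $\Im F_\mu(iy)=y+y\int\frac{1+t^{2}}{t^{2}+y^{2}}\,d\sigma(t)$, and splitting $\frac{y^{2}(1+t^{2})}{t^{2}+y^{2}}=\frac{t^{2}y^{2}}{t^{2}+y^{2}}+\frac{y^{2}}{t^{2}+y^{2}}$ yields
\[
\phi(y)=L_\sigma(y)+y^{2}\!\int_{-\infty}^{\infty}\frac{d\sigma(t)}{t^{2}+y^{2}}.
\]
The second term increases to $\sigma(\mathbb R)$ by monotone convergence, and nondegeneracy of $\mu$ forces $\sigma\neq0$, so $\sigma(\mathbb R)>0$; hence $\phi(y)\sim L_\sigma(y)+\sigma(\mathbb R)$ with no hypothesis at all. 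The same computation records $\Re F_\mu(iy)=a+\int\frac{t(1-y^{2})}{t^{2}+y^{2}}\,d\sigma(t)$, which I will need later for the mean.

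Next I would compute $\phi$ from $\mu$ using $F_\mu=1/G_\mu$ and $G_\mu(iy)=p(y)+iq(y)$ with $p(y)=-\int\frac{t}{t^{2}+y^{2}}\,d\mu(t)$ and $q(y)=-\frac1y\bigl(1-L_\mu(y)/y^{2}\bigr)$. Plugging these into $\Im F_\mu(iy)-y=\dfrac{-q(1+yq)-yp^{2}}{p^{2}+q^{2}}$ and clearing powers of $y$ gives
\[
\phi(y)=\frac{L_\mu(y)\bigl(1-L_\mu(y)/y^{2}\bigr)-y^{4}p(y)^{2}}{y^{2}p(y)^{2}+\bigl(1-L_\mu(y)/y^{2}\bigr)^{2}}.
\]
Here $L_\mu(y)/y^{2}=\int\frac{t^{2}}{t^{2}+y^{2}}\,d\mu\to0$ and $y\,p(y)=-\int\frac{ty}{t^{2}+y^{2}}\,d\mu\to0$ by dominated convergence (the integrand is bounded by $\tfrac12$), so the denominator tends to $1$; since the numerator is $\le L_\mu(y)$, as soon as $\phi$ is unbounded so is $L_\mu$, i.e.\ $L_\mu(y)\to\infty$. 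The factorisation $\frac{|t|y^{2}}{t^{2}+y^{2}}=\bigl(\frac{t^{2}y^{2}}{t^{2}+y^{2}}\bigr)^{1/2}\bigl(\frac{y^{2}}{t^{2}+y^{2}}\bigr)^{1/2}$ and Cauchy--Schwarz give the crude bound $|y^{2}p(y)|\le L_\mu(y)^{1/2}$; the sharpening I really need is that when $L_\mu(y)\to\infty$ one in fact has $|y^{2}p(y)|=o\bigl(L_\mu(y)^{1/2}\bigr)$, obtained by splitting the same integral at $|t|=M(y)$, where $M(y)\to\infty$ is chosen so that $H_\mu(M(y))=o(L_\mu(y))$ (possible precisely because $L_\mu\to\infty$): the piece over $\{|t|\le M(y)\}$ is $\le H_\mu(M(y))^{1/2}$ and that over $\{|t|>M(y)\}$ is $\le L_\mu(y)^{1/2}\mu(\{|t|>M(y)\})^{1/2}$. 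Hence: if $L_\mu(y)\to\infty$ then $y^{4}p(y)^{2}=o(L_\mu(y))$ and $\phi(y)\sim L_\mu(y)$; if $L_\mu$ is bounded then $m_{2}(\mu)<\infty$, $m(\mu)$ exists, $y^{2}p(y)\to-m(\mu)$, and $\phi(y)\to m_{2}(\mu)-m(\mu)^{2}=\mathrm{var}(\mu)$.

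Comparing the two formulas then gives everything. Boundedness of $\phi$ is equivalent both to boundedness of $L_\sigma$ and to boundedness of $L_\mu$, i.e.\ $m_{2}(\sigma)<\infty\iff m_{2}(\mu)<\infty$, and in that case $\mathrm{var}(\mu)=\lim\phi=m_{2}(\sigma)+\sigma(\mathbb R)$: this is (3). If instead $L_\sigma\in R_{0}$ with $L_\sigma(y)\to\infty$, then $\phi\to\infty$, so $L_\mu\to\infty$ and $L_\mu(y)\sim\phi(y)\sim L_\sigma(y)+\sigma(\mathbb R)\in R_{0}$; since $H_\mu\sim L_\mu$ (cited), $H_\mu\in R_{0}$ and $H_\mu(x)-m(\mu)^{2}\sim L_\sigma(x)+\sigma(\mathbb R)$. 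The converse is the mirror image: from $H_\mu\in R_{0}$ one gets $L_\mu\in R_{0}$, then $\phi\sim L_\mu\in R_{0}$ when $L_\mu\to\infty$, hence $L_\sigma\sim\phi-\sigma(\mathbb R)\in R_{0}$; the case $L_\mu$ bounded is (3) again, and there $L_\sigma$ is bounded and non-decreasing, so it is either $\equiv0$ or tends to a positive limit, i.e.\ $L_\sigma\in R_{0}$ or $L_\sigma\equiv0$. For the mean identity, once $H_\mu\in R_{0}$ is known, the result of Feller cited above gives that $m(\mu)$ exists, so $F_\mu(iy)=iy-m(\mu)+o(1)$; comparing with $\Im F_\mu(iy)-y=\phi(y)/y\to0$ (valid since $L_\sigma\in R_{0}$ forces $L_\sigma(y)=o(y)$ by (P2)) and $\Re F_\mu(iy)=a+\int\frac{t(1-y^{2})}{t^{2}+y^{2}}\,d\sigma\to a-m(\sigma)$ (dominated convergence, using $\int|t|\,d\sigma<\infty$ because $H_\sigma\in R_{0}$) gives $m(\mu)=m(\sigma)-a$. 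Finally (2): when $L_\sigma$ is bounded it is immediate from dominated convergence since $\frac{|t|^{3}x}{t^{2}+x^{2}}\le\frac{t^{2}}{2}$; when $L_\sigma(y)\to\infty$, write $\frac{|t|^{3}x}{t^{2}+x^{2}}=\frac{|t|}{x}\cdot\frac{t^{2}x^{2}}{t^{2}+x^{2}}$ to bound the integral over $\{|t|\le\varepsilon x\}$ by $\varepsilon L_\sigma(x)$, bound $\frac{|t|^{3}x}{t^{2}+x^{2}}\le x|t|$ on $\{|t|>\varepsilon x\}$, and invoke the second-order Karamata estimates $u^{2}\sigma(\{|t|>u\})=o(H_\sigma(u))$ and $\int_{u}^{\infty}s^{-2}H_\sigma(s)\,ds\sim u^{-1}H_\sigma(u)$ (with $H_\sigma\sim L_\sigma$) to see that $x\int_{|t|>\varepsilon x}|t|\,d\sigma=o(L_\sigma(x))$; letting $\varepsilon\downarrow0$ finishes it. The main obstacle throughout is the refined bound $|y^{2}\Re G_\mu(iy)|=o\bigl(L_\mu(y)^{1/2}\bigr)$: it is exactly what keeps the ``real-part drift'' of $F_\mu$ from spoiling the asymptotics of $\phi$ in the absence of any moment assumption on $\mu$, and it is the one place where one must argue with a truncation level tending to infinity rather than a fixed one.
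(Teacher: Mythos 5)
Your argument is correct, and it is worth noting that the paper itself offers no proof of this proposition: it is quoted verbatim as a summary of Lemma 3.5 and Propositions 3.6--3.7 of Pata's paper \cite{PataCLT}, so there is no in-text argument to compare against line by line. What you have produced is a self-contained derivation in the same analytic spirit as Pata's: you compute the single quantity $\phi(y)=y\bigl(\Im F_{\mu}(iy)-y\bigr)$ once from the Nevanlinna data, where it is exactly $L_{\sigma}(y)+y^{2}\int(t^{2}+y^{2})^{-1}d\sigma(t)\sim L_{\sigma}(y)+\sigma(\mathbb{R})$, and once from $G_{\mu}$, where the only delicate point is to show that the real part of $G_{\mu}$ does not pollute the asymptotics; your moving-truncation Cauchy--Schwarz bound $|y^{2}\Re G_{\mu}(iy)|=o\bigl(L_{\mu}(y)^{1/2}\bigr)$ when $L_{\mu}\to\infty$ (split at a level $M(y)\to\infty$ with $H_{\mu}(M(y))=o(L_{\mu}(y))$, which exists precisely because $L_{\mu}$ is unbounded) is the right device and is carried out correctly. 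The remaining ingredients you invoke are all facts the paper itself treats as background -- Pata's equivalence $H\in R_{0}\iff L\in R_{0}$ with $H\sim L$ (stated in Section 2.2, and not circular here since it is a different result of \cite{PataCLT} than the one being proved), Feller's observation that a slowly varying truncated variance forces a finite absolute first moment, Karamata's theorem, and the tail estimate $u^{2}\sigma(|t|>u)=o(H_{\sigma}(u))$ -- so their use is legitimate; your proof of part (2) via the split at $|t|=\varepsilon x$ is likewise sound. Two cases are glossed but harmless: the forward implication of (1) when $L_{\sigma}$ is bounded (it follows at once from your part (3), since then $H_{\mu}$ tends to the finite positive limit $m_{2}(\mu)$), and the asymptotic $H_{\mu}-m(\mu)^{2}\sim L_{\sigma}+\sigma(\mathbb{R})$ in that bounded case, where both sides tend to the same positive constant $\operatorname{var}(\mu)=m_{2}(\sigma)+\sigma(\mathbb{R})$. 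The net effect is that your write-up actually supplies a proof where the paper only supplies a citation, and the asymptotic-comparison mechanism you use is the same one the paper later exploits (e.g.\ the functions $f_{n}$ and $U_{\sigma}$ in the proof of Theorem 3.1), so it fits the paper's toolkit well.
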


\section{Main Results }

\subsection{Central limit theorems}

Let $\{ X_{n}\}_{n=1}^{\infty}$ be a sequence of classically
independent real-valued random variables with common distribution $\mu\in\mathcal{M}$. The distribution of the scaled sum
\[ \frac{X_{1}+X_{2}+\cdots+X_{n}}{C_{n}}\] is the probability
law $D_{1/C_{n}}\mu^{*n}$, where $\mu^{*n}$ denotes the $n$-fold
classical convolution power of the measure $\mu$. The classical CLT
asserts that there exists a sequence $C_{n}>0$ such that the
sequence $D_{1/C_{n}}\mu^{*n}$ converges weakly to the standard
normal law $\mathcal{N}$ as $n\rightarrow\infty$ if and only if
$H_{\mu}\in R_{0}$ and the mean $m(\mu)=0$ (see \cite{Feller}). Our
first result here is a monotonic analogue of the above CLT, in which
the limiting distribution is the standard arc-sine law $\gamma$ with
the reciprocal Cauchy transform\[
F_{\gamma}(z)=\sqrt{z^{2}-2},\qquad z\in\mathbb{C}^{+}.\] Here, and
in the sequel, the branch of the square root function is chosen so
that it is analytic in $\mathbb{C}\setminus[0,+\infty)$ and
$\sqrt{-1}=i$.

\begin{thm}
\textup{(General Monotone CLT)} Let $\mu$ be a nondegenerate probability
measure on $\mathbb{R}$. Then the following assertions are equivalent:
\begin{enumerate}
\item There exists a sequence $B_{n}>0$ such that the measures $D_{1/B_{n}}\mu^{\triangleright n}$
converge weakly to the arc-sine law $\gamma$ as $n\rightarrow\infty$.
\item The function $H_{\mu}$ is slowly varying and the mean of the measure
$\mu$ is zero.
\end{enumerate}
\end{thm}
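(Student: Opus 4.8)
The starting point is the identity $F_{\mu^{\triangleright n}}=F_\mu^{\circ n}$ together with the dilation formula $F_{D_b\mu}(z)=bF_\mu(z/b)$, by which assertion (1) is equivalent to the existence of constants $B_n>0$ with $B_n^{-1}F_\mu^{\circ n}(B_nz)\to F_\gamma(z)=\sqrt{z^2-2}$ for every $z\in\mathbb{C}^+$. The engine of the proof is the telescoping identity
\[
F_\mu^{\circ n}(z)^2 = z^2 + \sum_{j=0}^{n-1} g\bigl(F_\mu^{\circ j}(z)\bigr),\qquad g(w):=F_\mu(w)^2-w^2=E_\mu(w)\bigl(2w+E_\mu(w)\bigr),
\]
where $E_\mu(w):=F_\mu(w)-w=a+\int_{-\infty}^{\infty}\frac{1+tw}{t-w}\,d\sigma(t)$ by the Nevanlinna form \eqref{eq:2.3}; the analogous identity $F_\mu^{\circ n}(z)=z+\sum_{j=0}^{n-1}E_\mu(F_\mu^{\circ j}(z))$ will handle the mean. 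Along the imaginary axis the Nevanlinna form yields the exact relations $\Re\bigl((is)E_\mu(is)\bigr)=-K_\sigma(s)$, where $K_\sigma(s):=\int_{-\infty}^{\infty}\frac{(1+t^2)s^2}{t^2+s^2}\,d\sigma(t)$ is nondecreasing and satisfies $K_\sigma(s)=\sigma(\mathbb{R})+L_\sigma(s)-o(1)$, and $\Re E_\mu(w)\to-m(\mu)$ as $w\to\infty$ non-tangentially once $H_\mu\in R_0$. The plan is to feed these exact relations through the telescoping identity and then translate the resulting information on $\sigma$ (equivalently on $K_\sigma$, $L_\sigma$) into information on $H_\mu$ and $m(\mu)$ by Proposition 2.1.

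For (2)$\Rightarrow$(1), assume $H_\mu\in R_0$ and $m(\mu)=0$; then $m(\sigma)$ is finite, $a=m(\sigma)$, and $E_\mu(w)=\int_{-\infty}^{\infty}\frac{1+t^2}{t-w}\,d\sigma(t)$. Using the elementary inequality $|t-w|\ge c_\theta\sqrt{t^2+|w|^2}$ valid on every cone $\Gamma_\theta=\{w:\Im w\ge(\sin\theta)|w|\}$, together with Proposition 2.1(2) and the uniform convergence theorem for slowly varying functions, I would prove the uniform estimate $g(w)=-2\bigl(\sigma(\mathbb{R})+L_\sigma(|w|)\bigr)(1+o(1))=-2H_\mu(|w|)(1+o(1))$ as $|w|\to\infty$ in $\Gamma_\theta$ (the degenerate subcase $L_\sigma\equiv0$, i.e.\ $\sigma=c\delta_0$ and $F_\mu(z)=z-c/z$, is handled by the same telescoping with the explicit $g(z)=-2c+c^2/z^2$). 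Choose $B_n\to\infty$ from (P3) applied to $H_\mu$, so $nB_n^{-2}H_\mu(B_n)\to1$, fix $\zeta\in\mathbb{C}^+$, and set $w_j^{(n)}=F_\mu^{\circ j}(B_n\zeta)$. Since $\Im w_j^{(n)}$ is nondecreasing in $j$ with $\Im w_0^{(n)}=B_n\Im\zeta\to\infty$, a bootstrap using $|g(w)|\lesssim H_\mu(|w|)$ and $nH_\mu(B_n)\sim B_n^2$ confines the iterates to $B_n\Im\zeta\le|w_j^{(n)}|\le CB_n$ for all $j\le n$ and all large $n$; hence they lie in one fixed cone with modulus comparable to $B_n$, the estimate above applies uniformly, and $\sum_{j=0}^{n-1}g(w_j^{(n)})=-2nH_\mu(B_n)(1+o(1))=-2B_n^2(1+o(1))$. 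Therefore $\bigl(B_n^{-1}F_\mu^{\circ n}(B_n\zeta)\bigr)^2\to\zeta^2-2$; since $w\mapsto w^2$ is a homeomorphism of $\mathbb{C}^+$ onto $\mathbb{C}\setminus[0,\infty)$ and $\zeta^2-2\notin[0,\infty)$ whenever $\zeta\in\mathbb{C}^+$, it follows that $B_n^{-1}F_\mu^{\circ n}(B_n\zeta)\to\sqrt{\zeta^2-2}=F_\gamma(\zeta)$, which is (1).

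For (1)$\Rightarrow$(2), a convergence-of-types argument for $\triangleright$ using the strict $\triangleright$-stability $\gamma^{\triangleright k}=D_{\sqrt k}\gamma$ (so that $D_{1/B_m}\mu^{\triangleright m}\Rightarrow\gamma$ for every $m$ forces $B_{kn}\sim\sqrt k\,B_n$) shows that $B_n$ is regularly varying of index $1/2$; in particular $B_n\to\infty$, $n/B_n\to\infty$, and $n\mapsto n/B_n^2$ is slowly varying. (Much of this parallels \cite{JCLevy}; alternatively $B_n\to\infty$ is forced because nondegeneracy of $\mu$ makes $F_\mu^{\circ n}\to\infty$ locally uniformly by the Denjoy--Wolff theorem, while a bounded subsequence of $B_n$ would prevent $D_{1/B_n}\mu^{\triangleright n}$ from converging weakly to the nondegenerate law $\gamma$.) Applying (1) along the subsequences $m=[tn]$ and using $B_{[tn]}\sim\sqrt t\,B_n$ gives, for $z=iy$, the a priori location $F_\mu^{\circ[tn]}(iB_ny)\sim iB_n\sqrt{y^2+2t}$ uniformly for $t\in[0,1]$: the iterates $w_j^{(n)}$ are asymptotically purely imaginary, of modulus comparable to $B_n$, with $\Im w_j^{(n)}$ sweeping the interval $[B_ny,\,B_n\sqrt{y^2+2}(1+o(1))]$. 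Using this to control the error terms, the $F^2$-telescoping (which gives $B_n^{-2}\sum_{j<n}g(w_j^{(n)})\to-2$) reduces to $B_n^{-2}\sum_{j<n}K_\sigma(\Im w_j^{(n)})\to1$; since $K_\sigma$ is nondecreasing, sandwiching together with the regular variation of $B_n$ yields $nB_n^{-2}K_\sigma(cB_n)\to1$ for all $c>\sqrt2$, hence $K_\sigma\in R_0$, hence $L_\sigma\in R_0$ (or $L_\sigma\equiv0$), and so $H_\mu\in R_0$ by Proposition 2.1(1). Finally, now that $H_\mu\in R_0$ we have $\Re E_\mu(w_j^{(n)})=-m(\mu)+o(1)$ uniformly in $j\le n$; the $F$-telescoping gives $B_n^{-1}\sum_{j<n}\Re E_\mu(w_j^{(n)})\to\Re\bigl(i(\sqrt{y^2+2}-y)\bigr)=0$, and since $n/B_n\to\infty$ this forces $m(\mu)=0$.

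The main obstacle is the direction (1)$\Rightarrow$(2), and within it the passage from the convergence of the Riemann-type sum $\sum_{j<n}g(w_j^{(n)})$ to the pointwise growth of $K_\sigma$: this rests on (i) the convergence-of-types input pinning down the regular variation of $B_n$; (ii) enough a priori control on the iterates $w_j^{(n)}$ (that they stay near the imaginary axis, at scale $B_n$, with $\Im w_j^{(n)}$ sweeping a full interval) so that $\Re g(w_j^{(n)})$ is genuinely squeezed between nearby values of the monotone function $-2K_\sigma$; and (iii) showing that the remaining errors---$E_\mu(w_j^{(n)})^2$, the real parts of the iterates, and $K_\sigma(s)-(\sigma(\mathbb{R})+L_\sigma(s))$---are negligible on the scale $B_n^2$, which is delicate because the natural bounds for these errors are themselves close to the quantities being estimated. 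The monotonicity of $K_\sigma$ (equivalently the convexity of its primitive) is the structural fact that upgrades averaged information to pointwise asymptotics and makes this direction go through. Direction (2)$\Rightarrow$(1) is comparatively routine, the one delicate point being the bootstrap keeping $|w_j^{(n)}|$ comparable to $B_n$.
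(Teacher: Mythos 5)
Your direction (2)$\Rightarrow$(1) is essentially the paper's own argument: the same conjugation by $z\mapsto z^{2}$, the same telescoping of $F_\mu(w)^2-w^2$, the same choice of $B_n$ via (\textbf{P3}), and the same use of Proposition 2.1(2) to kill the $|t|^3$-term; the paper simply works at $z=iy$, $10<y<11$, and lets tightness identify the limit instead of controlling a full cone, so no issue there (apart from the routine details of your bootstrap/cone estimate, which the paper's Lemma 4.1 carries out).

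The gap is in (1)$\Rightarrow$(2), exactly at the step you yourself flag as delicate: the reduction of the squared telescoping to $B_n^{-2}\sum_{j<n}K_\sigma(\Im w_j^{(n)})\to 1$. To make that reduction you must show that $\sum_{j<n}\bigl[\,2\Re w_j^{(n)}\,\Re E_\mu(w_j^{(n)})+E_\mu(w_j^{(n)})^2\,\bigr]=o(B_n^2)$. But at this stage of the argument nothing is known about $m(\mu)$ or $H_\mu$, so $\Re E_\mu$ admits no useful bound along the orbit: for $w$ near the imaginary axis one only has $|\Re E_\mu(w)|\le |a|+\bigl|\int \tfrac{t\,|w|^2}{t^2+|w|^2}\,d\sigma(t)\bigr|+o(\Im E_\mu(w))$, and by Cauchy--Schwarz the middle term is bounded only by $\sqrt{\sigma(\mathbb{R})K_\sigma(|w|)}$, a bound that is attained (up to constants) for suitably skewed $\sigma$. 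Since the a priori information from the linear telescoping gives $nK_\sigma(CB_n)=O(B_n^2)$, this yields $|\Re E_\mu(w_j^{(n)})|\lesssim B_n/\sqrt{n}$, hence $\sum_j E_\mu(w_j^{(n)})^2=O(B_n^2)$ (with a non-small constant) and, with only $|\Re w_j^{(n)}|=o(B_n)$, the cross term is merely $o(\sqrt{n}\,B_n^2)$. So the error terms are \emph{not} negligible on the scale $B_n^2$ by the bounds you have, and the sweeping-plus-monotonicity sandwich for $K_\sigma$ does not get off the ground without a further idea; you name the obstacle but do not supply the mechanism that removes it. The paper avoids squaring altogether in this direction: writing $F_\mu=z+a+A$ with $A$ as in \eqref{eq:2.3}, it uses the tightness bound $|F_\mu^{\circ j}(iB_ny)-iB_ny|\le\varepsilon B_ny$ (valid for all $j\le n$ after replacing $B_n$ by a monotone equivalent) to compare each $A(F_\mu^{\circ j}(iB_ny))$ with the single value $A(iB_ny)$, with error at most $2\varepsilon\,\Im A(F_\mu^{\circ j}(iB_ny))$; summing the \emph{linear} telescoping then gives the two-sided inequality \eqref{eq:4.7} between $n\,y\,U_\sigma(B_ny)$ and $\Im F_{\mu_n}(iy)-y$, and regular variation of $U_\sigma$ (index $-2$) follows from monotonicity of $U_\sigma$ and $B_{n+1}\sim B_n$ by letting $n\to\infty$, then $y\to\infty$, then $\varepsilon\to0$ --- no control of $\Re E_\mu$ is ever needed before the mean is identified. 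Your final step for $m(\mu)=0$ (real part of the linear telescoping plus $n/B_n\to\infty$) is fine once $H_\mu\in R_0$ is in hand, and is close to the paper's; likewise your appeal to a $\triangleright$-convergence-of-types statement for $B_n\in R_{1/2}$ matches the paper's Remark 3.3 (quoting \cite{JCLevy}). The one missing idea, then, is the base-point comparison that replaces your squared telescoping in (1)$\Rightarrow$(2).
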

A priori, the norming constants $B_{n}$ and $C_{n}$ in the monotonic
and the classical CLT's could be different. Here we would like to
emphasize that we can actually choose the same constants for both
limit theorems. More precisely, in the proof of Theorem 3.1 we take
$B_{n}=C_{n}$ to be the classical cutoff constants
\begin{equation} \inf\left\{ y>0:\, nH_{\mu}(y)\leq y^{2}\right\}
.\label{eq:3.1}\end{equation} (See \eqref{eq:4.1} below for
details.)

Let $\nu\in\mathcal{M}$ with $\nu\neq\delta_{0}$. We say that a
probability measure $\mu$ belongs to the \emph{strict domain of
attraction} of the law $\nu$ (relative to $\triangleright$, and we
write $\mu\in\mathcal{D}_{\triangleright}[\nu]$) if the weak
convergence $D_{1/B_{n}}\mu^{\triangleright n}\Rightarrow\nu$ holds
for some $B_{n}>0$. The strict domains of attraction relative to the
convolutions $*$ and $\boxplus$ are defined analogously.

For any probability law $\mu$, Theorem 3.1 shows the equivalence of
$D_{1/B_{n}}\mu^{\triangleright n}\Rightarrow\gamma$ and
$D_{1/B_{n}}\mu^{*n}\Rightarrow\mathcal{N}$. By the free CLT
\cite{PataCLT}, this equivalence extends to
$D_{1/B_{n}}\mu^{\boxplus n}\Rightarrow\mathcal{S}$, where
$\mathcal{S}$ denotes the standard semicircle law. We record this
consequence formally in the following

\begin{cor}
One has that $\mathcal{D}_{\triangleright}[\gamma]=\mathcal{D}_{*}[\mathcal{N}]=\mathcal{D}_{\boxplus}[\mathcal{S}]$.
\end{cor}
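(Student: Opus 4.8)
The plan is to derive Corollary 3.2 from Theorem 3.1 together with the classical and free central limit theorems, by noting that all three strict domains of attraction are described by one and the same analytic condition on $\mu$. Concretely, I would introduce the set
\[
\mathcal{A}=\left\{\mu\in\mathcal{M}:\ \mu\ \text{is nondegenerate},\ H_{\mu}\in R_{0},\ \text{and}\ m(\mu)=0\right\},
\]
and show that each of $\mathcal{D}_{\triangleright}[\gamma]$, $\mathcal{D}_{*}[\mathcal{N}]$ and $\mathcal{D}_{\boxplus}[\mathcal{S}]$ equals $\mathcal{A}$. As a preliminary, degenerate $\mu$ are easily disposed of: if $\mu=\delta_{c}$ then $\mu^{\triangleright n}=\mu^{*n}=\mu^{\boxplus n}=\delta_{nc}$, so every dilation $D_{1/B_{n}}$ of these powers is again a point mass and cannot converge weakly to any of the non-atomic laws $\gamma$, $\mathcal{N}$, $\mathcal{S}$; hence no degenerate measure belongs to any of the three domains, consistent with the exclusion built into $\mathcal{A}$.

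For nondegenerate $\mu$ I would then invoke three equivalences. Theorem 3.1 gives $\mu\in\mathcal{D}_{\triangleright}[\gamma]$ if and only if $H_{\mu}\in R_{0}$ and $m(\mu)=0$, i.e.\ $\mathcal{D}_{\triangleright}[\gamma]=\mathcal{A}$. The classical central limit theorem recalled at the start of Section 3.1 (see \cite{Feller}) states that $\mu\in\mathcal{D}_{*}[\mathcal{N}]$ if and only if $H_{\mu}\in R_{0}$ and $m(\mu)=0$, that is $\mathcal{D}_{*}[\mathcal{N}]=\mathcal{A}$. Finally, the free central limit theorem of Pata \cite{PataCLT} asserts that $\mu\in\mathcal{D}_{\boxplus}[\mathcal{S}]$ if and only if $H_{\mu}\in R_{0}$ and $m(\mu)=0$, whence $\mathcal{D}_{\boxplus}[\mathcal{S}]=\mathcal{A}$. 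Chaining these three identities yields $\mathcal{D}_{\triangleright}[\gamma]=\mathcal{D}_{*}[\mathcal{N}]=\mathcal{D}_{\boxplus}[\mathcal{S}]$. I would also record the sharper statement implicit in the argument: because the proof of Theorem 3.1 produces the weak convergence with the explicit cutoff constants $B_{n}=\inf\{y>0:\,nH_{\mu}(y)\leq y^{2}\}$ of \eqref{eq:3.1}, and Pata's proof uses the same normalization, for a fixed $\mu\in\mathcal{A}$ a single sequence $B_{n}$ simultaneously realizes $D_{1/B_{n}}\mu^{\triangleright n}\Rightarrow\gamma$, $D_{1/B_{n}}\mu^{*n}\Rightarrow\mathcal{N}$, and $D_{1/B_{n}}\mu^{\boxplus n}\Rightarrow\mathcal{S}$.

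There is essentially no deep obstacle here: the corollary is a bookkeeping consequence once Theorem 3.1 is available. The only point that needs care is to cite the \emph{general} (infinite-variance) forms of the classical and free CLTs, phrased via $H_{\mu}\in R_{0}$ rather than via finite variance, since the finite-variance versions alone would identify only the restrictions of these domains to measures with finite second moment.
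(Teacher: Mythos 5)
Your proposal is correct and follows essentially the same route as the paper: the author simply notes that Theorem 3.1 identifies $\mathcal{D}_{\triangleright}[\gamma]$ with the set characterized by $H_{\mu}\in R_{0}$ and $m(\mu)=0$, which is exactly the classical strict domain of attraction of $\mathcal{N}$ (Feller) and, by Pata's free CLT, the strict domain of attraction of $\mathcal{S}$. Your extra bookkeeping (excluding degenerate laws, noting the common cutoff constants $B_{n}$ from \eqref{eq:3.1}) is consistent with the paper's remarks and adds nothing problematic.
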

\begin{rem}
Given a measure $\mu\in\mathcal{M}$ and a sequence $B_{n}>0$, it was
shown in Theorem 4.3 of \cite{JCLevy} that if the measures
$D_{1/B_{n}}\mu^{\triangleright n}$ converge weakly to a
nondegenerate law $\nu\in\mathcal{M}$, then there exists a unique
$\alpha\in(0,2]$ such that the norming sequence
$B_{n}=n^{1/\alpha}f(n)$ for some $f\in R_{0}$. In fact, the
correspondence between the function $f(x)$ and the sequence $B_{n}$
is given by $f(x)=[x]^{-1/\alpha}B_{[x]}$, where $[x]$ means the
integral part of $x$. Furthermore, by Theorem 3.4 of \cite{JCLevy},
the limit law $\nu$ must be $\triangleright$-strictly stable with
the index $\alpha$. The arc-sine law $\gamma$ represents the class
of strictly stable laws of index $2$ in monotone probability, as the
normal law does in classical probability.
\end{rem}
Thus, the sequence $B_{n}$ in Theorem 3.1 is necessarily of the form
$\sqrt{n}b_{n}$, where $b_{n}$ is a slowly varying sequence. The
usual form of the CLT corresponds to the case when $b_{n}$ is a constant
sequence, and we have

\begin{thm}
\textup{(Monotone CLT)} Let $\mu$ be any nondegenerate probability
measure on $\mathbb{R}$, and let $a\in\mathbb{R}$ and $b>0$. Then
the following statements are equivalent:
\begin{enumerate}
\item The weak convergence $D_{1/\sqrt{nb}}\left(\mu\triangleright\delta_{-a}\right)^{\triangleright n}\Rightarrow\gamma$
holds.
\item The measure $\mu$ has finite variance.
\end{enumerate}
If \textup{(1)} and \textup{(2)} are satisfied, then the constants
$a$ and $b$ can be chosen as $a=m(\mu)$ and $b=\emph{var}(\mu)$.

\end{thm}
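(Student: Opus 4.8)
The plan is to reduce Theorem 3.4 to Theorem 3.1 and then to track the norming constants explicitly. Since $F_{\delta_{-a}}(z)=z+a$, the measure $\tilde\mu:=\mu\triangleright\delta_{-a}$ is simply the translate of $\mu$ by $-a$; hence $\tilde\mu$ is nondegenerate, $m(\tilde\mu)=m(\mu)-a$ whenever the mean exists, $\mathrm{var}(\tilde\mu)=\mathrm{var}(\mu)$, and $\mu$ has finite variance if and only if $\tilde\mu$ does. Moreover $(\mu\triangleright\delta_{-a})^{\triangleright n}=\tilde\mu^{\triangleright n}$, so assertion (1) (with $a$ and $b$ understood to be chosen suitably, as the last sentence of the statement records) says precisely that $D_{1/\sqrt{nb}}\tilde\mu^{\triangleright n}\Rightarrow\gamma$ for some $a\in\mathbb{R}$ and $b>0$.

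For the implication (2) $\Rightarrow$ (1), assume $\mu$ has finite variance and put $a=m(\mu)$, $b=\mathrm{var}(\mu)$; here $b>0$ because $\mu$ is nondegenerate. Then $m(\tilde\mu)=0$ and $H_{\tilde\mu}(x)\to m_{2}(\tilde\mu)=b$ as $x\to\infty$, so $H_{\tilde\mu}$ is bounded and therefore lies in $R_{0}$. Theorem 3.1 then yields $D_{1/C_{n}}\tilde\mu^{\triangleright n}\Rightarrow\gamma$, where $C_{n}$ is the cutoff constant \eqref{eq:3.1} used in the proof of Theorem 3.1. A short elementary estimate using only $H_{\tilde\mu}(x)\to b$ shows $C_{n}\sim\sqrt{nb}$: for every $\varepsilon>0$ and all large $n$ one has $n(b-\varepsilon)\le nH_{\tilde\mu}(C_{n})\le C_{n}^{2}$ on the one hand, and $nH_{\tilde\mu}\big(\sqrt{n(b+\varepsilon)}\big)<n(b+\varepsilon)$ on the other, whence $\sqrt{n(b-\varepsilon)}\le C_{n}\le\sqrt{n(b+\varepsilon)}$. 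Finally, writing $D_{1/\sqrt{nb}}\tilde\mu^{\triangleright n}=D_{C_{n}/\sqrt{nb}}\big(D_{1/C_{n}}\tilde\mu^{\triangleright n}\big)$ and noting $C_{n}/\sqrt{nb}\to1$, a Slutsky-type step preserves the weak limit: the reciprocal Cauchy transforms of $D_{1/C_{n}}\tilde\mu^{\triangleright n}$ converge to $F_{\gamma}$ uniformly on compact subsets of $\mathbb{C}^{+}$, so post-composing with $z\mapsto z\sqrt{nb}/C_{n}$ and multiplying by $C_{n}/\sqrt{nb}$ still produces $F_{\gamma}$ in the limit. This gives (1) with $a=m(\mu)$ and $b=\mathrm{var}(\mu)$.

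For (1) $\Rightarrow$ (2), suppose $D_{1/\sqrt{nb}}\tilde\mu^{\triangleright n}\Rightarrow\gamma$. By Theorem 3.1, $H_{\tilde\mu}\in R_{0}$ and $m(\tilde\mu)=0$; what remains is to upgrade slow variation of $H_{\tilde\mu}$ to boundedness. By the proof of Theorem 3.1 the convergence also holds with the cutoff constant $C_{n}$ of \eqref{eq:3.1}, and since $\gamma$ is invariant under no dilation $D_{c}$ with $c\neq1$ (indeed $F_{D_{c}\gamma}(z)=\sqrt{z^{2}-2c^{2}}$) while weakly convergent sequences of probability measures are tight, comparing the two norming sequences forces $\sqrt{nb}\sim C_{n}$. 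As $nH_{\tilde\mu}(C_{n})\sim C_{n}^{2}$ (the defining property of the cutoff constant, once $C_{n}\to\infty$) and $H_{\tilde\mu}$ varies slowly, this gives $H_{\tilde\mu}\big(\sqrt{nb}\big)\to b$. Since $H_{\tilde\mu}$ is non-decreasing and $\sqrt{(n+1)b}/\sqrt{nb}\to1$, sandwiching $H_{\tilde\mu}(y)$ between $H_{\tilde\mu}(\sqrt{nb})$ and $H_{\tilde\mu}(\sqrt{(n+1)b})$ for $y\in[\sqrt{nb},\sqrt{(n+1)b}]$ upgrades this to $H_{\tilde\mu}(y)\to b$ for all $y$. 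Hence $m_{2}(\tilde\mu)=b<\infty$, so $\mu$ has finite variance, and then necessarily $\mathrm{var}(\mu)=\mathrm{var}(\tilde\mu)=b$ and $m(\mu)=a$, confirming the asserted choice of constants.

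I expect the main obstacle to be exactly the passage, in (1) $\Rightarrow$ (2), from ``$H_{\tilde\mu}$ slowly varying'' to ``$H_{\tilde\mu}$ bounded'': the \emph{specific} admissible sequence $\sqrt{nb}$ carries no genuinely slowly varying correction, which is what pins the relation $nH_{\tilde\mu}(C_{n})\sim C_{n}^{2}$ down to the sharp statement $H_{\tilde\mu}(\sqrt{nb})\to b$, and converting this along-a-sequence limit into an honest limit then uses monotonicity of $H_{\tilde\mu}$ together with $C_{n+1}/C_{n}\to1$. Everything else is either a direct appeal to Theorem 3.1 or a routine continuity and tightness argument.
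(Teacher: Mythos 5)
Your reduction has a circularity problem at its central step. In the direction (2) $\Rightarrow$ (1) you invoke Theorem 3.1 \emph{with the cutoff constants} \eqref{eq:3.1} for the finite-variance, mean-zero measure $\tilde\mu=\mu\triangleright\delta_{-a}$. But the paper's proof of Theorem 3.1, (2) $\Rightarrow$ (1), explicitly restricts itself to the infinite-variance case ($L_{\sigma}(x)\rightarrow\infty$), stating that measures with finite variance ``will be treated in Theorem 3.4''; the finite-variance case of Theorem 3.1 --- and in particular the fact that the convergence holds with the specific constants \eqref{eq:3.1}, equivalently with constants satisfying \eqref{eq:4.1} --- is established in the paper only by the proof of Theorem 3.4 itself, namely by rerunning the estimates of Section 4.1 (the bound \eqref{eq:4.2}, Lemmas 4.1 and 4.2, and the conjugacy argument) with $B_{n}=\sqrt{nb}$, which is legitimate and indeed easier there because $(1+t^{2})\,d\sigma(t)$ is a finite measure and \eqref{eq:4.1} holds by Proposition 2.1(3). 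So, as written, your argument assumes exactly the analytic content that Theorem 3.4 is meant to supply: citing the \emph{statement} of Theorem 3.1 only gives convergence for \emph{some} norming sequence, and pinning that sequence down to $\sqrt{nb}$ (or to the cutoff constants) requires going inside the proof, which for finite variance is precisely the proof of the present theorem. The missing step is therefore to verify, as the paper does, that the argument of Theorem 3.1 goes through verbatim for finite variance with $B_{n}=\sqrt{nb}$.

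Apart from this, your bookkeeping is sound and in places more explicit than the paper. The appeal to Theorem 3.1, (1) $\Rightarrow$ (2), is legitimate (that direction is proved in the paper without any variance restriction), and your route for the converse --- convergence of types to force $C_{n}\sim\sqrt{nb}$, Feller's relation $nH_{\tilde\mu}(C_{n})\sim C_{n}^{2}$, then monotonicity and $C_{n+1}/C_{n}\rightarrow1$ to upgrade $H_{\tilde\mu}(\sqrt{nb})\rightarrow b$ to $H_{\tilde\mu}(y)\rightarrow b$ --- is a correct alternative to the paper's, which instead reruns the derivation of \eqref{eq:4.7} with $B_{n}=\sqrt{nb}$ and reads off the boundedness of $L_{\sigma}$, hence finite variance, directly. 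Note, however, that your use of the cutoff-constant convergence in that direction again presupposes the finite-variance case of Theorem 3.1 unless you add a one-line case split: if $\operatorname{var}(\tilde\mu)=\infty$, the infinite-variance case of the paper's proof applies and your computation $H_{\tilde\mu}(C_{n})\rightarrow b<\infty$ yields a contradiction; if the variance is finite, there is nothing left to prove. Once the circularity in (2) $\Rightarrow$ (1) is repaired by actually extending the proof of Theorem 3.1 to finite variance (the paper's ``word-for-word translation''), your proposal becomes a complete proof, with the small bonus that it shows the constants are forced, $a=m(\mu)$ and $b=\operatorname{var}(\mu)$, rather than merely admissible.
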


The proof of our results will be presented in the next section. Here
we would like to illustrate its main idea through the following example.

\begin{example}
Suppose $\mu=(\delta_{0}+\delta_{1})/2$. By taking
$B_{n}=\sqrt{n}/2$ and $a=1/2$, we obtain that \[
F_{n}(z)=F_{D_{1/B_{n}}(\mu\triangleright\delta_{-a})}(z)=z-\frac{1}{nz},\qquad
z\in\mathbb{C}^{+},\] for every $n\geq1$. To see that the measures
\[
\mu_{n}=D_{1/B_{n}}\left(\mu\triangleright\delta_{-a}\right)^{\triangleright
n}=\left[D_{1/B_{n}}\left(\mu\triangleright\delta_{-a}\right)\right]^{\triangleright
n}\] converge weakly to the law $\gamma$, we need to show the
pointwise convergence of the iterations $F_{n}^{\circ n}(z)$ to the
function $\sqrt{z^{2}-2}$ in an appropriate domain. To overcome the
difficulty of computing the iteration of $F_{n}$, we introduce the
following conjugacy functions: $\psi_{1}(z)=z^{2}$ and
$\psi_{2}(z)=\sqrt{z}$. Note that $\psi_{2}(-y^{2})=iy$ for all
$y>0$ and $\psi_{2}\circ\psi_{1}(z)=z$ for every
$z\in\mathbb{C}^{+}$. For $z=-y^{2}$, $y>1$, we observe that
\begin{eqnarray*}
F_{\mu_{n}}(\sqrt{z})^{2} & = & \psi_{1}\circ F_{n}^{\circ n}\circ\psi_{2}(z)\\
 & = & \left(\psi_{1}\circ F_{n}\circ\psi_{2}\right)^{\circ n}(z)\\
 & = & \left(z-\frac{2}{n}+\frac{1}{n^{2}z}\right)^{\circ n}\\
 & = & z-2+\frac{1}{n^{2}}\sum_{j=0}^{n-1}\frac{1}{\left(\psi_{1}\circ F_{n}\circ\psi_{2}\right)^{\circ j}(z)}=z-2+O\left(\frac{1}{n}\right).\end{eqnarray*}
Hence we have $F_{\mu_{n}}(iy)=\sqrt{(iy)^{2}-2+O(1/n)}$ for $y>1$,
which implies that
$\lim_{n\rightarrow\infty}F_{\mu_{n}}(z)=\sqrt{z^{2}-2}$ for $z=iy$,
$y>1$, as desired.
\end{example}

\subsection{Applications to free processes}

Let us now consider a \emph{$\triangleright$-convolution semigroup}
$\{\mu_{t}:t\geq0\}$ of probability measures on $\mathbb{R}$, that
is, $\mu_{0}=\delta_{0}$ and other $\mu_{t}$'s are nondegenerate for
$t>0$, $\mu_{s}\triangleright\mu_{t}=\mu_{s+t}$ for all $s,t\geq0$,
and the map $t\mapsto\mu_{t}$ is weakly continuous. The CLT holds in
this case, as well as the law of large numbers.

\begin{thm}
Let $\{\mu_{t}:t\geq0\}$ be a $\triangleright$-convolution semigroup.
\begin{enumerate}
\item If there is a time parameter $t_{0}>0$ such that $\mu_{t_{0}}\in\mathcal{D}_{\triangleright}[\gamma]$,
then one can find a positive function $B(t)\in R_{1/2}$ such that
$D_{1/B(t)}\mu_{t}\Rightarrow\gamma$ as $t\rightarrow\infty$. In
particular, we have $\mu_{t}\in\mathcal{D}_{\triangleright}[\gamma]$
for every $t>0$.
\item If the mean $a=m(\mu_{t_{0}})$ exists for some $t_{0}>0$, then $D_{1/t}\mu_{t}\Rightarrow\delta_{a/t_{0}}$
as $t\rightarrow\infty$.
\end{enumerate}
\end{thm}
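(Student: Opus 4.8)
The plan is to bootstrap the discrete results of Section~3 to the continuous parameter $t$ by exploiting the semigroup identity $\mu_{s+t}=\mu_{s}\triangleright\mu_{t}$. Throughout I will use two soft facts. First, the family $\{\mu_{s}:0\le s\le t_{0}\}$ is tight: the map $s\mapsto\mu_{s}$ is weakly continuous on the compact interval $[0,t_{0}]$, so its image is weakly compact, hence tight by Prokhorov's theorem; consequently $D_{1/c}\mu_{s}\Rightarrow\delta_{0}$ uniformly over $0\le s\le t_{0}$ whenever $c\to+\infty$. Second, weak convergence is preserved by $\triangleright$ (Section~2), and an elementary computation with reciprocal Cauchy transforms shows that if $D_{1/c_{n}}\nu_{n}\Rightarrow\lambda$ and $c_{n}'/c_{n}\to1$ then also $D_{1/c_{n}'}\nu_{n}\Rightarrow\lambda$ (write $D_{1/c_{n}'}\nu_{n}=D_{c_{n}/c_{n}'}(D_{1/c_{n}}\nu_{n})$ and use that pointwise convergence of reciprocal Cauchy transforms on $\mathbb{C}^{+}$ is automatically locally uniform).

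Part~(1). By Theorem~3.1, the hypothesis $\mu_{t_{0}}\in\mathcal{D}_{\triangleright}[\gamma]$ forces $H_{\mu_{t_{0}}}\in R_{0}$ and $m(\mu_{t_{0}})=0$, and, by the choice of norming constants recorded in \eqref{eq:3.1}, the sequence $B_{n}=\inf\{y>0:\,nH_{\mu_{t_{0}}}(y)\le y^{2}\}$ already satisfies $D_{1/B_{n}}\mu_{t_{0}}^{\triangleright n}\Rightarrow\gamma$. Since $H_{\mu_{t_{0}}}\in R_{0}$, the function $y\mapsto y^{2}/H_{\mu_{t_{0}}}(y)$ lies in $R_{2}$ and, by property \textbf{P4}, is asymptotic to a non-decreasing function, so its generalized inverse is in $R_{1/2}$; hence $B(t):=\inf\{y>0:\,(t/t_{0})H_{\mu_{t_{0}}}(y)\le y^{2}\}$ defines (after an irrelevant modification on a bounded set) a non-decreasing function in $R_{1/2}$ with $B(t)\to\infty$, $B(nt_{0})=B_{n}$, and $B(t)\sim B_{\lfloor t/t_{0}\rfloor}$ as $t\to\infty$. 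Now for $t\ge t_{0}$ write $t=nt_{0}+r$ with $n=\lfloor t/t_{0}\rfloor$ and $r\in[0,t_{0})$; then $\mu_{t}=\mu_{t_{0}}^{\triangleright n}\triangleright\mu_{r}$ and so $D_{1/B(t)}\mu_{t}=\bigl(D_{1/B(t)}\mu_{t_{0}}^{\triangleright n}\bigr)\triangleright\bigl(D_{1/B(t)}\mu_{r}\bigr)$. As $t\to\infty$ the first factor converges weakly to $\gamma$ (because $D_{1/B_{n}}\mu_{t_{0}}^{\triangleright n}\Rightarrow\gamma$ and $B(t)/B_{n}\to1$, by the comparison fact above), while the second converges weakly to $\delta_{0}$ (tightness of $\{\mu_{s}:0\le s\le t_{0}\}$ together with $B(t)\to\infty$); since $\triangleright$ respects weak convergence this yields $D_{1/B(t)}\mu_{t}\Rightarrow\gamma\triangleright\delta_{0}=\gamma$. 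Finally, for each fixed $t>0$ one has $\mu_{t}^{\triangleright n}=\mu_{nt}$, so $D_{1/B(nt)}\mu_{t}^{\triangleright n}\Rightarrow\gamma$ as $n\to\infty$ by what was just proved; hence $\mu_{t}\in\mathcal{D}_{\triangleright}[\gamma]$.

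Part~(2). The core is a law of large numbers for monotone convolution: if $\nu\in\mathcal{M}$ has a finite mean $m(\nu)$, then $D_{1/n}\nu^{\triangleright n}\Rightarrow\delta_{m(\nu)}$. To prove it, write $F_{\nu}(w)=w-m(\nu)+\varepsilon(w)$; since $m(\nu)$ is finite it is standard (cf. \cite{BVunbdd,PataCLT}) that $\varepsilon(w)\to0$ as $w\to\infty$ non-tangentially, in particular $\varepsilon$ is bounded on every fixed Stolz angle near $\infty$, and $\Im\varepsilon\ge0$ because $\Im F_{\nu}\ge\Im w$. Iterating $n$ times, $F_{\nu}^{\circ n}(nz)=nz-nm(\nu)+\sum_{k=0}^{n-1}\varepsilon\bigl(F_{\nu}^{\circ k}(nz)\bigr)$, so $F_{D_{1/n}\nu^{\triangleright n}}(z)=\frac{1}{n}F_{\nu}^{\circ n}(nz)=z-m(\nu)+\frac{1}{n}\sum_{k=0}^{n-1}\varepsilon\bigl(F_{\nu}^{\circ k}(nz)\bigr)$. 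The key observation is that, for $z$ fixed, all the points $F_{\nu}^{\circ k}(nz)$, $0\le k\le n-1$, lie in a single Stolz angle $S_{\theta_{z}}$, with $\theta_{z}=(|\Re z|+|m(\nu)|+1)/\Im z$ depending only on $z$ and $m(\nu)$, once $n$ is large: their imaginary parts are $\ge n\Im z$ since $\Im F_{\nu}^{\circ k}(nz)$ is non-decreasing in $k$, while each iteration step moves the real part by at most $|m(\nu)|+o(1)\le|m(\nu)|+1$, so $|\Re F_{\nu}^{\circ k}(nz)|\le n(|\Re z|+|m(\nu)|+1)$ for $k\le n$, and this induction closes with the stated $\theta_{z}$. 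Hence $\sup_{0\le k\le n-1}|\varepsilon(F_{\nu}^{\circ k}(nz))|\to0$ and the Cesàro average vanishes, proving the claim. Applying it to $\nu=\mu_{t_{0}}$ (with $m(\mu_{t_{0}})=a$) and rescaling gives $D_{1/(nt_{0})}\mu_{nt_{0}}=D_{1/t_{0}}\bigl(D_{1/n}\mu_{t_{0}}^{\triangleright n}\bigr)\Rightarrow\delta_{a/t_{0}}$. For general $t=nt_{0}+r\to\infty$ with $r\in[0,t_{0})$ we split $D_{1/t}\mu_{t}=\bigl(D_{1/t}\mu_{t_{0}}^{\triangleright n}\bigr)\triangleright\bigl(D_{1/t}\mu_{r}\bigr)$ exactly as in Part~(1); the first factor tends to $\delta_{a/t_{0}}$ (since $t\sim nt_{0}$, by the comparison fact) and the second to $\delta_{0}$ (tightness), so $D_{1/t}\mu_{t}\Rightarrow\delta_{a/t_{0}}\triangleright\delta_{0}=\delta_{a/t_{0}}$.

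The step I expect to require the most care is the non-tangential estimate $F_{\nu}(w)=w-m(\nu)+o(1)$ underlying Part~(2): when $\nu$ has a finite mean but infinite variance one cannot bound the relevant integral crudely, and one splits it over dyadic ranges $|t|\in(2^{j},2^{j+1}]$ and sums a geometric-type series controlled by the tail $\int_{|t|>R}|t|\,d\nu(t)\to0$; alternatively this may be taken from the harmonic-analytic toolkit of \cite{BVunbdd,PataCLT}. Everything else is soft: Prokhorov tightness, weak continuity of $\triangleright$, and the Karamata machinery (\textbf{P2}, \textbf{P4}) needed to place $B(t)$ in $R_{1/2}$. (One could alternatively route Part~(1) through the infinitesimal generator of the semigroup $\{\mu_{t}\}$, but the interpolation argument above is more elementary and self-contained.)
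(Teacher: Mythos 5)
Your argument is correct, and for part (1) it is essentially the paper's own interpolation argument: write $t=n t_{0}+r$, decompose $\mu_{t}=\mu_{t_{0}}^{\triangleright n}\triangleright\mu_{r}$, kill the remainder by tightness of $\{\mu_{s}:0\le s\le t_{0}\}$, and absorb the mismatch of norming constants by regular variation. The only cosmetic difference there is how $B(t)$ is produced: the paper takes an arbitrary norming sequence $C_{n}$ for $\mu_{t_{0}}$, sets $B(x)=C_{[x]}/\sqrt{t_{0}}$, and invokes Remark 3.3 to get $B\in R_{1/2}$, whereas you rebuild $B(t)$ from the Karamata cutoff $\inf\{y>0:(t/t_{0})H_{\mu_{t_{0}}}(y)\le y^{2}\}$ and the inverse-function theorem for regularly varying functions; both are legitimate, the paper's choice avoiding the (mild) extra bookkeeping about monotone equivalents and generalized inverses. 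For part (2) you genuinely diverge: the paper simply cites the monotone law of large numbers $D_{1/n}\nu^{\triangleright n}\Rightarrow\delta_{m(\nu)}$ from Theorem 5.1 of \cite{JCLevy} and omits the interpolation details, while you prove that LLN from scratch via the non-tangential expansion $F_{\nu}(w)=w-m(\nu)+o(1)$ and a Stolz-angle stability induction for the iterates $F_{\nu}^{\circ k}(nz)$ (imaginary parts non-decreasing and $\ge n\Im z$, real parts drifting by at most $|m(\nu)|+1$ per step). That induction does close as you state, and the expansion itself follows from dominated convergence using the uniform bound $|tz/(z-t)|\le C_{\theta}|t|$ in a fixed Stolz angle, so your flagged ``delicate step'' is fine; the payoff of your route is a self-contained proof not relying on the external reference, at the cost of reproving a known result.
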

A \emph{free additive process} (in law) is a family
$(Z_{t})_{t\geq0}$ of random variables with the following
properties: (i) For each $n\geq1$ and $0\leq
t_{0}<t_{1}<\cdots<t_{n}$, the increments\[
Z_{t_{0}},Z_{t_{1}}-Z_{t_{0}},Z_{t_{2}}-Z_{t_{1}},\cdots,Z_{t_{n}}-Z_{t_{n-1}}\]
are \emph{freely independent} in the sense of Voiculescu \cite{VKN}.
(ii) For any $t$ in $[0,\infty)$, the distribution of
$Z_{s+t}-Z_{t}$ converges weakly to $\delta_{0}$ as $s\rightarrow0$.
(iii) The distribution of $Z_{0}$ is $\delta_{0}$ and that of other
$Z_{t}$'s are nondegenerate.

Given such a process $(Z_{t})_{t\geq0}$, let $\mu_{t}$ be the distribution
of $Z_{t}$ and $\mu_{s,t}$ be the distributions of $Z_{t}-Z_{s}$
whenever $0\leq s<t$ . Clearly, these laws satisfy $\mu_{0}=\delta_{0}$,
\[
\mu_{t}=\mu_{s}\boxplus\mu_{s,t}\quad\text{and}\quad\mu_{s,u}=\mu_{s,t}\boxplus\mu_{t,u},\qquad
0\leq s<t<u,\] and $\mu_{t}\Rightarrow\delta_{0}$ as
$t\rightarrow0$. Conversely, given family
$\{\mu_{t}:t\geq0\}\cup\{\mu_{s,t}:0\leq s<t\}$ in $\mathcal{M}$
with the above properties, there exists a free additive process
$(Z_{t})_{t\geq0}$ such that the distributions of $Z_{t}$ and
$Z_{t}-Z_{s}$ are $\mu_{t}$ and $\mu_{s,t}$, respectively (see
\cite{Biane}).

For a free additive process $(Z_{t})_{t\geq0}$ with marginal laws
$\mu_{t}$, Biane's subordination result shows that there exist
unique measures $\sigma_{s,t}\in\mathcal{M}$ such that
$\mu_{t}=\mu_{s}\triangleright\sigma_{s,t}$ and
$\sigma_{s,t}\triangleright\sigma_{t,u}=\sigma_{s,u}$ for all $0\leq
s<t<u$.

A free additive process $(Z_{t})_{t\geq0}$ is said to be a
\emph{free L\'{e}vy process of the second kind} (FLP2) if
$\sigma_{s+h,t+h}=\sigma_{s,t}$ for all $0\leq s<t$ and $h\geq0$.
Thus, the marginal laws $\mu_{t}=\sigma_{0,t}$ of such a process
form a $\triangleright$-convolution semigroup, and their reciprocal
Cauchy transforms $F_{\mu_{t}}$ form a \emph{composition semigroup}
of analytic self-maps on $\mathbb{C}^{+}$.

It is well-known that for a composition semigroup $\{
F_{t}\}_{t\geq0}$ of analytic self-maps on $\mathbb{C}^{+}$ with
$F_{0}(z)=z$, the infinitesimal generator \begin{equation}
\varphi(z)=\lim_{\varepsilon\rightarrow0^{+}}\frac{F_{\varepsilon}(z)-z}{\varepsilon},\qquad
z\in\mathbb{C}^{+},\label{eq:3.2}\end{equation} of $\{
F_{t}\}_{t\geq0}$ exists and is unique \cite{BPsemigroup}. The
function
$\varphi:\mathbb{C}^{+}\rightarrow\mathbb{C}^{+}\cup\mathbb{R}$ is
analytic with the property
$\lim_{y\rightarrow\infty}\varphi(iy)/iy=0$, and hence it can be
written as \begin{equation}
\varphi(z)=a+\int_{-\infty}^{\infty}\frac{1+xz}{x-z}\,
d\rho(x).\label{eq:3.3}\end{equation}

In \cite{Biane} Biane showed that a measure $\rho$ corresponds to
the semigroup $\{ F_{t}\}_{t\geq0}$ associated with a FLP2 if and
only if for each $t>0$ the function $\varphi\circ{F}^{-1}_{t}$ has
an analytic continuation to $\mathbb{C}^{+}$, with values in
$\mathbb{C}^{+}$. He called such a measure $\rho$ the \emph{L\'{e}vy
measure} of FLP2 and raised the question of finding a direct
description for $\rho$.

We have the following result.

\begin{thm}
Let $(Z_{t})_{t\geq0}$ be a \emph{FLP2} with marginal laws
$\mu_{t}$, and let $\rho$ be the L\'{e}vy \emph{}measure of the
semigroup $F_{\mu_{t}}$. Suppose that there is a time parameter
$t_{0}>0$ such that the mean $m(\mu_{t_{0}})=0$ or
$m(\mu_{t_{0}})=\infty$. Then
\begin{enumerate}
\item For every $t>0$, the function $H_{\mu_{t}}$ is not slowly varying.
\item The function $H_{\rho}$ is not slowly varying.
\end{enumerate}
In particular, we have the second moments $m_{2}(\rho)=\infty$ and
$m_{2}(\mu_{t})=\infty$ for $t>0$.

\end{thm}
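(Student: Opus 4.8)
The plan is to deduce the ``in particular'' clause from parts (1) and (2) and to prove (1) and (2) simultaneously by a single argument by contradiction. For the first point, note that $H_\nu$ is bounded exactly when $m_2(\nu)<\infty$, and a bounded positive function is trivially slowly varying; hence (1) gives $m_2(\mu_t)=\infty$ and (2) gives $m_2(\rho)=\infty$. For (1) and (2) themselves, recall that the marginals $\{\mu_t\}$ of a \emph{FLP2} form a $\triangleright$-convolution semigroup, so $\{F_{\mu_t}\}_{t\ge0}$ is a composition semigroup of self-maps of $\mathbb{C}^{+}$ with infinitesimal generator $\varphi$ as in \eqref{eq:3.2}, whose Nevanlinna form \eqref{eq:3.3} carries the finite measure $\rho$. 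The first task I would carry out is to prove the equivalence
\[
\bigl(\exists\, t_1>0:\ H_{\mu_{t_1}}\in R_0\bigr)\ \Longleftrightarrow\ H_\rho\in R_0\ \Longleftrightarrow\ \bigl(\forall\, t>0:\ H_{\mu_t}\in R_0\bigr).
\]

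To prove this I would argue regime by regime. If $L_\rho$ is bounded, then $m_2(\rho)<\infty$; combining the additivity $\mathrm{var}(\mu\triangleright\nu)=\mathrm{var}(\mu)+\mathrm{var}(\nu)$ with the expansion of $\varphi$ near $i\infty$ (equivalently Proposition 2.1(3) applied to $\varphi$) gives $\mathrm{var}(\mu_t)=(m_2(\rho)+\rho(\mathbb{R}))\,t<\infty$, so every $H_{\mu_t}$ is bounded, and the converse is the same computation read backwards. If $L_\rho$ is unbounded but slowly varying, then $\int_{\mathbb{R}}|x|\,d\rho(x)<\infty$ — otherwise the tails of $\rho$ would be too heavy for $L_\rho$ to lie in $R_0$, by the result of Feller recalled in Section 2 — so the drift rate $m(\rho)-a$ is finite, $m(\mu_t)$ is finite for every $t$, and $\Re F_{\mu_t}(iy)$ stays bounded as $y\to\infty$. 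Writing $v_t(y)=\Im F_{\mu_t}(iy)$, the generator equation $\partial_t F_{\mu_t}(z)=\varphi(F_{\mu_t}(z))$ gives, after taking imaginary parts,
\[
\partial_t v_t(y)=v_t(y)\int_{\mathbb{R}}\frac{1+x^2}{(x-\Re F_{\mu_t}(iy))^2+v_t(y)^2}\,d\rho(x);
\]
since $v_t(y)\sim y$ and $\Re F_{\mu_t}(iy)=O(1)$, integrating in $t$ yields $\Im F_{\mu_t}(iy)-y\asymp t\,L_\rho(y)/y$ as $y\to\infty$, and Proposition 2.1(1) turns this into $L_{\sigma_t}(y)\asymp t\,L_\rho(y)$, whence all the equivalences follow (the implication back from $H_{\mu_{t_1}}\in R_0$ for a single $t_1$ to $H_\rho\in R_0$ is obtained from the same estimate together with $F_{\mu_{s+t}}=F_{\mu_s}\circ F_{\mu_t}$). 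This regular-variation bookkeeping is the technical core, and is where I expect most of the work to lie.

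Granting the equivalence, suppose for contradiction that its common condition holds. Then $H_{\mu_{t_0}}\in R_0$, so $m(\mu_{t_0})$ is finite; if the hypothesis is $m(\mu_{t_0})=\infty$ this is already a contradiction, so we must have $m(\mu_{t_0})=0$. Then $\mu_{t_0}\in\mathcal{D}_{\triangleright}[\gamma]$ by Theorem 3.1, and Theorem 3.6(1) furnishes $B(t)=t^{1/2}\ell(t)\in R_{1/2}$ (with $\ell\in R_0$) such that $D_{1/B(t)}\mu_t\Rightarrow\gamma$ as $t\to\infty$. I would now rescale the process: for each $s>0$ the family $\{\nu^{(s)}_\tau:=D_{1/B(s)}\mu_{s\tau}\}_{\tau\ge0}$ is again the family of marginals of a \emph{FLP2} — time dilation and space dilation both preserve free independence, the $\triangleright$-semigroup property, and the stationarity of the subordinators — with generator $\varphi_s(z)=(s/B(s))\,\varphi(B(s)z)$ and univalent reciprocal Cauchy transforms $F_{\nu^{(s)}_\tau}$. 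Because $B(s\tau)/B(s)\to\sqrt{\tau}$ and $D_{1/B(s\tau)}\mu_{s\tau}\Rightarrow\gamma$ as $s\to\infty$, we get $\nu^{(s)}_\tau\Rightarrow D_{\sqrt{\tau}}\gamma$ for every $\tau>0$; by a standard continuity argument for composition semigroups (cf.\ \cite{BPsemigroup}) the generators converge, $\varphi_s\to\varphi_\gamma$ locally uniformly on $\mathbb{C}^{+}$ with $\varphi_\gamma(z)=-1/z$, and $F_{\nu^{(s)}_1}\to F_\gamma(z)=\sqrt{z^2-2}$ locally uniformly; passing to inverses (Carath\'eodory convergence of the image domains) gives $(F_{\nu^{(s)}_1})^{-1}\to\sqrt{z^2+2}$ locally uniformly on $F_\gamma(\mathbb{C}^{+})=\mathbb{C}^{+}\setminus\{it:0<t\le\sqrt{2}\}$, hence $\varphi_s\circ(F_{\nu^{(s)}_1})^{-1}\to-1/\sqrt{z^2+2}$ there.

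The last step is to play this off against Biane's criterion. For each $s$, $\varphi_s\circ(F_{\nu^{(s)}_1})^{-1}$ extends to an analytic map of $\mathbb{C}^{+}$ into $\mathbb{C}^{+}$; this family is normal, so along a subsequence the extensions converge locally uniformly on $\mathbb{C}^{+}$ to an analytic $E:\mathbb{C}^{+}\to\mathbb{C}^{+}\cup\mathbb{R}$ (the constant $\infty$ being excluded since the limit is finite on the open set $\mathbb{C}^{+}\setminus\{it:0<t\le\sqrt{2}\}$), and there $E$ must equal $-1/\sqrt{z^2+2}$. But $-1/\sqrt{z^2+2}$ has a branch point at $i\sqrt{2}\in\mathbb{C}^{+}$ and a non-zero jump across the slit $\{it:0<t<\sqrt{2}\}$ (its two boundary values at $it_0$ are $\mp(2-t_0^{2})^{-1/2}$), so it admits no analytic continuation to all of $\mathbb{C}^{+}$ — this is precisely the obstruction that already rules out the arc-sine $\triangleright$-convolution semigroup $\{D_{\sqrt{\tau}}\gamma\}$ as a \emph{FLP2}. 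The resulting contradiction shows that the common condition of the equivalence fails, i.e.\ $H_{\mu_t}\notin R_0$ for all $t$ and $H_\rho\notin R_0$, giving (1) and (2); the moment assertions follow as noted. The delicate points I foresee are, besides the regular-variation estimate above, securing the locally uniform convergence of the generators $\varphi_s$ and of the inverse maps (equivalently, the Carath\'eodory convergence of the domains $F_{\nu^{(s)}_1}(\mathbb{C}^{+})$), on which the final complex-analytic step rests.
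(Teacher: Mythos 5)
Your reduction of the moment statements to (1)--(2) is fine in substance (though note the stated reason is off: a bounded positive function need not be slowly varying, e.g.\ $2+\sin(\log x)$; what saves you is that when $m_{2}(\nu)<\infty$ the function $H_{\nu}$ increases to the finite positive limit $m_{2}(\nu)$, hence lies in $R_{0}$). The genuine gap is in your ``technical core''. You derive from the flow equation only the two-sided bound $\Im F_{\mu_t}(iy)-y\asymp t\,L_{\rho}(y)/y$, hence $L_{\sigma_t}(y)\asymp t\,L_{\rho}(y)$, and then claim ``whence all the equivalences follow''. Comparability with constants does \emph{not} transfer slow variation: $\log x$ and $(\log x)\,(2+\sin\log x)$ are $\asymp$, yet only the first is in $R_{0}$. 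You need genuine asymptotic equivalence, which is exactly what the paper's Lemma 4.3 provides: for each fixed $t$, $t^{-1}\left(\Im F_{t}(iy)-y\right)\sim\int_{0}^{1}\Im\varphi\left(F_{s}(iy)\right)ds\sim\Im\varphi(iy)$ as $y\to\infty$, proved using only the tightness of $\{\mu_{u}\}_{0\le u\le T}$ and the bound $\left|\Im\varphi(F_{ts}(iy))-\Im\varphi(F_{s}(iy))\right|\le\varepsilon(y)\,\Im\varphi(F_{s}(iy))$. In particular no finiteness of means and no control of $\Re F_{\mu_s}(iy)$ is needed there, whereas your regime-by-regime argument leans on $m(\rho)<\infty$ and $\Re F_{\mu_t}(iy)=O(1)$ --- information you do not have in the direction you need most, namely from $H_{\mu_{t_1}}\in R_{0}$ for a single $t_1$ back to $H_{\rho}\in R_{0}$ (and to $H_{\mu_{t_0}}\in R_{0}$); that implication is only gestured at via ``the same estimate together with $F_{\mu_{s+t}}=F_{\mu_s}\circ F_{\mu_t}$'', and recall the theorem expressly allows $m(\mu_{t_0})=\infty$, so the transfer lemma must not presuppose finite means.

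For the endgame you take a genuinely different route from the paper: rescaled FLP2's, convergence of generators $\varphi_{s}\to-1/z$, Carath\'eodory convergence of the inverses, normal families, and Biane's extension criterion against the branch point of $-1/\sqrt{z^{2}+2}$. This is workable in principle, but the pivotal Trotter--Kato-type step --- deducing locally uniform convergence of the infinitesimal generators from convergence of the time-$\tau$ maps for each fixed $\tau$ --- is asserted as ``standard'' and not supplied (it can be done, e.g.\ by normality of $\{\varphi_s\}$ and passing to the limit in $F^{(s)}_{\tau}(z)-z=\int_{0}^{\tau}\varphi_{s}(F^{(s)}_{u}(z))\,du$ with uniformity in $u$, but that is real work). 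The paper reaches the same obstruction in two lines: by Remark 3.8 every marginal $\mu_{t}$ is $\boxplus$-infinitely divisible, so the weak limit $\gamma$ of $D_{1/B(t)}\mu_{t}$ would be $\boxplus$-infinitely divisible, contradicting the fact that $F_{\gamma}^{-1}$ has no analytic continuation to $\mathbb{C}^{+}$ (Theorem 5.10 of \cite{BVunbdd}) --- the very same square-root obstruction you invoke, but without rescaling, kernel convergence, or generator limits. In short: your architecture parallels the paper's for part (1)/(2), but the regular-variation transfer must be upgraded from $\asymp$ to $\sim$ (i.e.\ you must in effect prove Lemma 4.3), and the final contradiction, while a legitimate alternative, currently rests on an unproved convergence of generators that the paper's infinite-divisibility argument renders unnecessary.
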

\begin{rem}
Every marginal law $\mu_{t}$ in a free additive process can be
written as a free convolution of infinitesimal probability measures;
for instance, we have
\[
\mu_{t}=\mu_{0,t/n}\boxplus\mu_{t/n,2t/n}\boxplus\cdots\boxplus\mu_{(n-1)t/n,t}.\]
Here the infinitesimality of the array
$\{\mu_{kt/n,(k+1)t/n}:n\geq1,\,0\leq k\leq n-1\}$ is guaranteed by
the stochastic continuity of the process $(Z_{t})_{t\geq0}$ (cf.
Remark 5.5 of \cite{BSLevy}). It follows that each measure $\mu_{t}$
is $\boxplus$-infinitely divisible (cf. \cite{BP in free hincin}).
\end{rem}

\subsection{Applications to ergodic theory of inner functions }

The general framework for infinite ergodic theory consists of a
$\sigma$-finite measure space $(X,\mathcal{F},\nu)$, $\nu(X)\neq0$, and
a measure preserving transformation $T:X\rightarrow X$. Thus, the
map $T$ is measurable with respect to the $\sigma$-field
$\mathcal{F}$ and $\nu(T^{-1}A)=\nu(A)$ for every set $A\in\mathcal{F}$.
The notation $T^{-1}A$ means the pre-image $\{ x\in X:Tx\in A\}$, and we write inductively $T^{-n}A=T^{-1}(T^{-(n-1)}A)$ for
$n\geq2$. As usual, the map $T$ is said to be \emph{ergodic} if for every set $A\in \mathcal{F}$ such that $T^{-1}A=A$, either $\nu(A)=0$ or $\nu(X\setminus{A})=0$. 

The key to understanding the recurrence behavior of the map $T$ lies in the study of its
\emph{conservativity}, a notion that can be traced back to E. Hopf's early work \cite{Hopf}. We say that $T$ is \emph{conservative} if for every set
$W\in\mathcal{F}$ such that $\{ T^{-n}W\}_{n=0}^{\infty}$ are
pairwise disjoint, necessarily $\nu(W)=0$. For a conservative
dynamical system $(X,\mathcal{F},\nu, T)$ and a non-null set $A\in \mathcal{F}$, one
has the occupation time $\sum_{j=0}^{\infty}I_A\circ T^{\circ
j}(x)=\infty$ a.e. on $A$ (i.e., the trajectory $\{T^{\circ j}(x)\}_{j=0}^{\infty}$ returns to the set $A$ infinitely often). The concept of conservativity plays
no role in finite measure spaces; for if $\nu(X)<\infty$, then any
measure preserving map $T$ on $X$ will be conservative. We refer to the book of Aaronson \cite{A} for the basics of infinite
ergodic theory.

An \emph{inner function} on $\mathbb{C}^{+}$ is an analytic map
$F:\mathbb{C}^{+}\rightarrow\mathbb{C}^{+}$ for which the limits \[
T(x)=\lim_{y\rightarrow0^{+}}F(x+iy)\in\mathbb{R}\] exist for almost
every $x\in\mathbb{R}$, relative to Lebesgue measure $\lambda$ on
$\mathbb{R}$. The measurable map $T:\mathbb{R}\rightarrow\mathbb{R}$
(defined modulo nullsets) is called the \emph{boundary restriction}
of $F$ to $\mathbb{R}$.

For $\mu\in\mathcal{M}$, we recall that\[
F_{\mu}(z)=z+a+\int_{-\infty}^{\infty}\frac{1+tz}{t-z}\,
d\sigma(t).\] It is known that the function $F_{\mu}$ is inner if
and only if $\sigma$ is singular with respect to $\lambda$ (cf.
Chapter 6 of \cite{A}) Clearly, this happens if and only if $\mu$ is
singular. Moreover, for a singular measure $\mu$, Letac
\cite{Letac1} has shown that the boundary restriction $T$ of
$F_{\mu}$ is a measure preserving transformation of the measure
space $(\mathbb{R},\mathcal{B},\lambda)$, and hence is an object of
ergodic theory. (The symbol $\mathcal{B}$ here denotes the Borel $\sigma$-field on $\mathbb{R}$.)

We shall fix a singular measure $\mu$ in $\mathcal{M}$. The ergodic
theory for the inner function $F_{\mu}$ was studied thoroughly in Aaronson's work
\cite{A1} (see also \cite{A}), where he proved that the
conservativity of the boundary restriction $T$ implies
the ergodicity of $T$, and that $T$ is conservative if and only if
\begin{equation} \sum_{n=1}^{\infty}\Im\frac{-1}{F_{\mu}^{\circ
n}(z)}=\infty\label{eq:3.4}\end{equation} for some $z\in \mathbb{C}^{+}$. Moreover, this condition is independent of the choice of
$z$. With this criterion, Aaronson further showed
that if the measure $\mu$ is compactly supported and $m(\mu)=0$,
then $T$ is conservative. (See also \cite{Letac2} for a different
approach to this result.) In the case of unbounded support, he
proved that if $\mu$ is symmetric (i.e., $\mu(A)=\mu(-A)$ for all $A\in\mathcal{B}$) and the function $H_{\mu}$ is
regularly varying with index $d$ for $d\in(0,1]$, then $T$ is
conservative.

Our next result gives a probabilistic criterion for the
conservativity of $T$, in which the measure $\mu$ is not assumed to
be compactly supported or symmetric. Note that the condition
\eqref{eq:3.5} below does not involve the iterations of $F_{\mu}$.

\begin{thm}
Let $\mu$ be a singular probability measure in the set
$\mathcal{D}_{*}[\mathcal{N}]$, and let $\{ B_{n}\}_{n=1}^{\infty}$ be a
positive sequence for which the classical \emph{CLT} holds for
$\mu$. If \begin{equation}
\sum_{n=1}^{\infty}\frac{1}{B_{n}^{^{2}}}=\infty,\label{eq:3.5}\end{equation}
then the boundary restriction of $F_{\mu}$ is conservative (and
hence ergodic).
\end{thm}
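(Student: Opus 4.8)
The plan is to verify Aaronson's conservativity criterion \eqref{eq:3.4}, namely that $\sum_{n=1}^{\infty}\Im(-1/F_{\mu}^{\circ n}(z))=\infty$ for some (hence every) $z\in\mathbb{C}^{+}$, by evaluating the series along the imaginary axis and comparing it with $\sum_n B_n^{-2}$. Note first that $-1/F_{\mu}^{\circ n}(z) = -G_{\mu^{\triangleright n}}(z)$, so $\Im(-1/F_{\mu}^{\circ n}(iy))$ is exactly the Poisson-type integral $\int \Im\!\big(1/(t - F_{\mu}^{\circ(n-1)}(iy))\big)\,d\mu(t)$ rewritten, and in any case it equals $\int \frac{\Im F_{\mu}^{\circ n}(iy)}{|F_{\mu}^{\circ n}(iy)|^2}\,d\mu(t)$ — wait, more directly: writing $F_{\mu}^{\circ n}(iy) = u_n + iv_n$ with $v_n \ge y$, we have $\Im(-1/F_{\mu}^{\circ n}(iy)) = v_n/(u_n^2+v_n^2)$. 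The engine of the argument should be the CLT itself: Theorem~3.1 (via Corollary~3.2, since $\mu\in\mathcal{D}_*[\mathcal N]$) tells us $D_{1/B_n}\mu^{\triangleright n}\Rightarrow\gamma$, and by the convergence criterion for reciprocal Cauchy transforms this means $B_n^{-1}F_{\mu}^{\circ n}(B_n z)\to\sqrt{z^2-2}$ uniformly on compacts of $\mathbb{C}^+$. In particular, fixing $z=i$, we get $F_{\mu}^{\circ n}(iB_n)/B_n \to \sqrt{-1-2}=\sqrt{-3}=i\sqrt{3}$, so $\Im F_{\mu}^{\circ n}(iB_n)\sim\sqrt{3}\,B_n$ and $|F_{\mu}^{\circ n}(iB_n)|\sim\sqrt{3}\,B_n$, whence $\Im(-1/F_{\mu}^{\circ n}(iB_n)) \sim \frac{1}{\sqrt{3}\,B_n}$. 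That, however, is a statement about the $n$-th iterate evaluated at the moving point $iB_n$, not at a fixed point.

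The key step is therefore to transfer this asymptotic from the moving evaluation point $iB_n$ to a single fixed point, say $z_0 = i$. The natural device is monotonicity in the $z$-variable combined with the semigroup/iteration structure: for a fixed $z_0\in\mathbb{C}^+$, once $B_n$ is large, $iB_n$ lies "above" $z_0$ in the sense that one can interpolate, and the harmonic function $\Im(-1/F_{\mu}^{\circ n}(\cdot))$ is positive and harmonic on $\mathbb{C}^+$, so a Harnack-type inequality on $\mathbb{C}^+$ gives, for any compact $K\ni z_0$ and any $w\in K$, a two-sided bound $c_K\,\Im(-1/F_{\mu}^{\circ n}(iB_n)) \le \Im(-1/F_{\mu}^{\circ n}(z_0))$ — but the points $iB_n$ escape to infinity, so one cannot keep them in a fixed compact. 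Instead I would argue along the iteration: write $F_\mu^{\circ n} = F_\mu^{\circ(n-k)}\circ F_\mu^{\circ k}$ for a suitable fixed $k$, use that $F_\mu^{\circ k}(iB_n)$ and $F_\mu^{\circ k}(z_0)$ are comparable in hyperbolic distance (since $F_\mu$ decreases hyperbolic distance, and one checks the hyperbolic distance from $iB_n$ to a fixed point stays controlled after enough iterations because $\Im F_\mu^{\circ j}(iy)\to\infty$ at a controlled rate)… This is getting delicate; the cleaner route is to recall that \cite{A1} establishes that the value of the (possibly infinite) sum $\sum_n\Im(-1/F_\mu^{\circ n}(z))$ being infinite is independent of $z$, and moreover that the \emph{partial sums} satisfy a Harnack comparison uniform in $n$: there is $C>0$ with $C^{-1}\Im(-1/F_\mu^{\circ n}(z_1))\le \Im(-1/F_\mu^{\circ n}(z_2))\le C\,\Im(-1/F_\mu^{\circ n}(z_1))$ for $z_1,z_2$ in a fixed compact. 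Since the series value is $z$-independent, I may as well evaluate the divergence along a cleverly chosen sequence of points.

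Concretely, here is the route I would take. By Corollary~3.2 and the $F$-transform characterization of weak convergence, $B_n^{-1}F_\mu^{\circ n}(B_n w)\to F_\gamma(w)=\sqrt{w^2-2}$ uniformly on compacts of $\mathbb{C}^+$; pick $w=2i$ so $F_\gamma(2i)=\sqrt{-6}=i\sqrt6$. Then
\[
\Im\!\left(\frac{-1}{F_\mu^{\circ n}(2iB_n)}\right) = \frac{1}{B_n}\,\Im\!\left(\frac{-1}{B_n^{-1}F_\mu^{\circ n}(2iB_n)}\right)\longrightarrow \frac{1}{B_n}\cdot\frac{1}{\sqrt6}\ \text{(in the sense that the ratio}\to1\text{)},
\]
so there is $c>0$ and $N$ with $\Im(-1/F_\mu^{\circ n}(2iB_n))\ge c/B_n$ for $n\ge N$. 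Next I invoke Aaronson's lemma from \cite{A1} (or re-derive it via Harnack on $\mathbb{C}^+$) that $\sum_n \Im(-1/F_\mu^{\circ n}(z))=\infty$ for one $z$ iff for all $z$; and to connect the moving points to a fixed one, I use that $F_\mu^{\circ n}(2iB_n)=F_\mu^{\circ(n-1)}(F_\mu(2iB_n))$ where $F_\mu(2iB_n)$ has imaginary part $\ge 2B_n\to\infty$ — so $F_\mu(2iB_n)$ is close, in hyperbolic metric, to $iB_n'$ for a comparable $B_n'$, and iterating one sees the orbits of $2iB_n$ and of a fixed $z_0$ under $F_\mu$ synchronize after $O(1)$ steps; hence the partial sums $\sum_{n\le N}\Im(-1/F_\mu^{\circ n}(z_0))$ and $\sum_{n\le N}\Im(-1/F_\mu^{\circ n}(2iB_n))$ differ by at most a bounded factor plus a bounded additive error. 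Combining, $\sum_n\Im(-1/F_\mu^{\circ n}(z_0))\gtrsim \sum_n c/B_n$, which diverges whenever $\sum_n B_n^{-2}=\infty$ — here I use $B_n\to\infty$ (from Remark~3.3, $B_n=\sqrt n\,b_n$ with $b_n$ slowly varying, so eventually $B_n\ge 1$, hence $B_n^{-1}\ge B_n^{-2}$ and $\sum B_n^{-1}=\infty$). Thus \eqref{eq:3.4} holds and $T$ is conservative, and ergodicity follows from Aaronson's theorem that conservativity implies ergodicity for boundary restrictions of inner functions. The main obstacle, I expect, is exactly the transfer step — making rigorous that the divergence detected along the escaping sequence $2iB_n$ forces divergence of the fixed-point series — and depending on how much of \cite{A1} one is willing to cite, this is either a one-line appeal to the $z$-independence of the criterion together with a Harnack estimate, or a somewhat fussy hyperbolic-geometry synchronization argument that I would present carefully.
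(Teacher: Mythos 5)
Your reduction to Aaronson's criterion \eqref{eq:3.4} and your use of the CLT at the moving points $2iB_n$ (giving $\Im(-1/F_\mu^{\circ n}(2iB_n))\sim 1/(\sqrt{6}\,B_n)$) are fine, but the transfer step you yourself flag as the main obstacle is a genuine gap, and the particular fix you sketch cannot work at the strength you claim. The $z$-independence result in \cite{A1} compares the series at two \emph{fixed} points of $\mathbb{C}^{+}$; it says nothing about evaluating the $n$-th term at a point $2iB_n$ escaping to infinity. Your ``synchronization after $O(1)$ steps, so the partial sums differ by a bounded factor'' claim is unjustified: $F_\mu$ is only non-expanding in the hyperbolic metric, the hyperbolic distance from $2iB_n$ to a fixed $z_0$ is of order $\log B_n\to\infty$, and there is no mechanism forcing the two orbits to come within bounded hyperbolic distance after boundedly many steps. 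Worse, if that claim were true your argument would yield $\sum_n\Im(-1/F_\mu^{\circ n}(z_0))\gtrsim\sum_n B_n^{-1}=\infty$ for \emph{every} $\mu\in\mathcal{D}_{*}[\mathcal{N}]$ (your final step uses only $B_n\geq 1$), so the hypothesis \eqref{eq:3.5} would be superfluous --- a strong sign the transfer loses more than a bounded factor. The correct cost of descending from height $B_n$ to height $1$ for the positive harmonic function $u_n=\Im(-1/F_\mu^{\circ n})$ is a factor of order $B_n^{-1}$ (Poisson-kernel/Harnack comparison on the imaginary axis: $u_n(i)\geq c\,B_n^{-1}u_n(2iB_n)$), which turns your $c/B_n$ into $c'/B_n^{2}$ and is exactly why the theorem assumes $\sum B_n^{-2}=\infty$. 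With that corrected comparison your argument closes, and it then essentially coincides with the paper's proof, which avoids moving points altogether: it works at the fixed point $z=i$, writes
\begin{equation*}
\Im\frac{-1}{F_{\mu}^{\circ n}(i)}=\int_{-\infty}^{\infty}\frac{1}{1+t^{2}}\,d\mu^{\triangleright n}(t)\geq\frac{1}{1+B_{n}^{2}}\,\mu^{\triangleright n}([-B_{n},B_{n}]),
\end{equation*}
and uses $D_{1/B_n}\mu^{\triangleright n}\Rightarrow\gamma$ together with the absolute continuity of $\gamma$ to get $\mu^{\triangleright n}([-B_{n},B_{n}])\to\gamma([-1,1])=1/2$, so that \eqref{eq:3.5} immediately gives \eqref{eq:3.4}. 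Note also that this portmanteau step (convergence of the probabilities of $[-B_n,B_n]$, which requires $\gamma(\{\pm1\})=0$) is a detail your sketch would still need even after repairing the Harnack comparison, since weak convergence alone gives nothing about the mass of $\mu^{\triangleright n}$ on sets that shrink at the limiting scale.
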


We conclude this section by showing some examples of conservative
transformations. It is easy to see from Proposition 2.1 that a
measure $\mu\in\mathcal{M}$ has finite variance and $m(\mu)=0$ if
and only if the Nevanlinna form of $F_{\mu}$ can be rewritten as: \[
F_{\mu}(z)=z+\int_{-\infty}^{\infty}\frac{1}{t-z}\, d\rho(t),\]
where $\rho$ is a finite positive Borel measure on $\mathbb{R}$.
Moreover, one has $\rho(\mathbb{R})=\text{var}(\mu)$.

\begin{example}
(a) If the measure $\mu$ has finite variance, then Theorem 3.4
implies that $B_{n}\sim \sqrt{n\text{var}(\mu)}$ as
$n\rightarrow\infty$. So, the condition \eqref{eq:3.5} is always
satisfied in this case. In particular, the \emph{generalized Boole
transformation} \[ T(x)=x+\sum_{n=1}^{\infty}\frac{p_{n}}{t_{n}-x}\]
is conservative and ergodic, whenever $t_{n}\in\mathbb{R}$ and
$p_{n}>0$ are sequences such that
$\sum_{n=1}^{\infty}p_{n}<\infty$. Boole's original transformation
$x\mapsto x-x^{-1}$ was proved to be ergodic by Adler and Weiss in
\cite{AdlerWeiss}. In case $\{ t_{n}\}_{n=1}^{\infty}$ and $\{
p_{n}\}_{n=1}^{\infty}$ are finite sequences, the ergodicity of $T$
is due to Li and Schweiger \cite{LiSch}.

(b) ($d=0$) In the case of infinite variance, the simplest way to construct a conservative transformation is to discretize a law from the domain of attraction of the normal law $\mathcal{N}$. Let
$\nu$ be a probability measure with $m(\nu)=0$ and $\nu(\{
t\in\mathbb{R}:\left|t\right|>x\})=x^{-2}$. Then the measure $\nu$
has infinite variance and satisfies the classical CLT with the
norming constants $B_{n}=\sqrt{n\log n}$ (see \eqref{eq:3.1}).
Let $\sigma$ be the atomic probability measure drawn from the law $\nu$:
\[ \sigma=\sum_{k\in\mathbb{Z}}p_{k}\delta_{k},\] where
$p_{k}=\nu([k,k+1))$ for all $k\in\mathbb{Z}$. It follows that the
measure $\sigma$ also satisfies the classical CLT with the same
constants $B_{n}$.

Now, let $\mu$ be the probability measure defined via the formula:
\[
F_{\mu}(z)=z+\int_{-\infty}^{\infty}\frac{1+tz}{t-z}\, d\sigma(t).\]
Since $m(\sigma)=0$ and $L_{\sigma}\in R_{0}$, we know $m(\mu)=0$
and $H_{\mu}\in R_{0}$ from Proposition 2.1; in other words,
$\mu\in\mathcal{D}_{*}[\mathcal{N}]$.

We now claim that the same sequence $\{ B_{n}\}_{n=1}^{\infty}$
serves as the norming constants for the CLT of the measure $\mu$.
Indeed, the sequence $B_{n}$ satisfies
$nB_{n}^{-2}H_{\sigma}(B_{n})\sim1$ $(n\rightarrow\infty)$. This is
equivalent to the relation
\[
nB_{n}^{-2}\left[L_{\sigma}(B_{n})+\sigma\left(\mathbb{R}\right)\right]\sim1\qquad(n\rightarrow\infty),\]
which is exactly the criterion of selecting the norming constants in
the Monotone CLT for the measure $\mu$ (see \eqref{eq:4.1}). It
follows that the sequence $B_{n}$ can be used as the norming
constants in the classical CLT for $\mu$.

Therefore, by Theorem 3.9, the boundary restriction \[
T(x)=x+\sum_{k\in\mathbb{Z}}\frac{1+kx}{k-x}p_{k}\] is conservative
and ergodic.

\end{example}

\section{The Proofs}

\subsection{Proof of Theorem 3.1 }

Fix a nondegenerate measure $\mu\in\mathcal{M}$. Suppose that the
function $H_{\mu}$ is slowly varying (and hence $m(\mu)$ exists).
Assume further that $m(\mu)=0$. Let us first specify the positive
sequence $\{ B_{n}\}_{n=1}^{\infty}$ that will be used to prove the
weak convergence of the measures \[
\mu_{n}=D_{1/B_{n}}\mu^{\triangleright n},\qquad n\geq1.\]

By Proposition 2.1 (1), the function $F_{\mu}$ has the Nevanlinna
form \[ F_{\mu}(z)=z+\int_{-\infty}^{\infty}\frac{1+t^{2}}{t-z}\,
d\sigma(t),\qquad z\in\mathbb{C}^{+},\] where the function
$L_{\sigma}\in R_{0}$ or $L_{\sigma}=0$. The case $L_{\sigma}=0$
implies that the measure $\sigma$ takes the form $r\delta_{0}$ for
some $r>0$, and hence the measure
$\mu=(\delta_{-\sqrt{r}}+\delta_{\sqrt{r}})/2$. Then this case
reduces to Example 3.5. Thus, we may and do assume that
$L_{\sigma}\neq0$ and $L_{\sigma}\in R_{0}$. Since measures with
finite variance will be treated in Theorem 3.4, we confine ourselves
to the case of infinite variance; that is,
$L_{\sigma}(x)\rightarrow\infty$ as $x\rightarrow\infty$.

Since the function $L_{\sigma}(x)+\sigma(\mathbb{R})$ is slowly
varying, (\textbf{P3}) implies that there exists a sequence
$B_{n}>0$ such that $\lim_{n\rightarrow\infty}B_{n}=+\infty$ and the
relation
\begin{equation}
nB_{n}^{-2}\left[L_{\sigma}\left(B_{n}y\right)+\sigma\left(\mathbb{R}\right)\right]\sim1\qquad(n\rightarrow\infty)\label{eq:4.1}\end{equation}
holds for each $y>0$. (The constants $B_{n}$ as in \eqref{eq:3.1} do
satisfy these conditions, see Feller's book \cite[Section
IX.8.]{Feller}.)

For notational convenience, we set $F_{n}(z)=F_{D_{1/B_{n}}\mu}(z)$
and write

\[
F_{n}(z)=z+\frac{1}{B_{n}}\int_{-\infty}^{\infty}\frac{1+t^{2}}{t-B_{n}z}\, d\sigma(t),\qquad z\in\mathbb{C}^{+}.\]
For every $z=x+iy\in\mathbb{C}^{+}$ with $\left|x\right|<y$, one
has that\begin{eqnarray*}
\left|\frac{1}{B_{n}}\int_{-\infty}^{\infty}\frac{1+t^{2}}{t-B_{n}z}\, d\sigma(t)\right| & \leq & \frac{1}{B_{n}}\int_{-\infty}^{\infty}\frac{1+t^{2}}{\left|t-B_{n}iy\right|}\left|\frac{t-B_{n}iy}{t-B_{n}z}\right|\, d\sigma(t)\\
 & \leq & \frac{2}{B_{n}}\int_{-\infty}^{\infty}\frac{1+t^{2}}{\sqrt{t^{2}+B_{n}^{^{2}}y^{2}}}\, d\sigma(t)\\
 & \leq & \frac{2}{B_{n}}\int_{-\infty}^{\infty}\left[\frac{1}{B_{n}y}+\frac{t^{2}}{t^{2}+B_{n}^{^{2}}y^{2}}\sqrt{t^{2}+B_{n}^{^{2}}y^{2}}\right]\, d\sigma(t)\\
 & \leq & \frac{2}{B_{n}^{^{2}}y}\int_{-\infty}^{\infty}\left[1+\frac{t^{2}B_{n}y}{t^{2}+B_{n}^{^{2}}y^{2}}\left(\left|t\right|+B_{n}y\right)\right]\, d\sigma(t).\end{eqnarray*}
Hence we have, for such $z$'s and $n\geq1$, that \begin{equation}
\left|F_{n}(z)-z\right|\leq\frac{2}{B_{n}^{^{2}}\Im z}\left[L_{\sigma}\left(B_{n}\Im z\right)+\sigma(\mathbb{R})+\int_{-\infty}^{\infty}\frac{\left|t\right|^{3}B_{n}\Im z}{t^{2}+B_{n}^{^{2}}\left(\Im z\right)^{2}}\, d\sigma(t)\right].\label{eq:4.2}\end{equation}

\begin{lem}
There exists $N=N(\mu)>0$ such that if $n\geq N$, then the estimate
\begin{equation}
\left|F_{n}^{\circ
j}(iy)-iy\right|\leq\frac{10j}{n}\label{eq:4.3}\end{equation} holds
uniformly for $y>10$ and for any integer $j$ between $0$ and $n$. In
particular, the sequence $\{\mu_{n}\}_{n=1}^{\infty}$ is tight.
\end{lem}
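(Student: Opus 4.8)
The plan is to bootstrap the one-step bound \eqref{eq:4.2} into the $j$-step bound \eqref{eq:4.3} by induction on $j$, using the fact that the iterates $F_n^{\circ j}$ never leave the region where \eqref{eq:4.2} is valid. First I would fix $y>10$ and set $w_j = F_n^{\circ j}(iy)$, so $w_0 = iy$. The key geometric observation is that $\Im F_n(z)\ge \Im z$ always holds (from the Nevanlinna form, as noted after \eqref{eq:2.3}), so $\Im w_j$ is non-decreasing in $j$; in particular $\Im w_j\ge y>10$ for all $j$. Consequently, as long as the running real-part estimate keeps $|\Re w_j|<\Im w_j$ — which the inductive hypothesis $|w_j-iy|\le 10j/n\le 10$ will guarantee once we also know $\Im w_j$ stays comparable to $y$, say $\Im w_j\ge y-10\ge \tfrac{9}{10}y$ when $y$ is large, or more simply by choosing the constant so that $|\Re w_j|<\Im w_j$ is forced — the bound \eqref{eq:4.2} applies at the point $w_j$, giving
\[
|w_{j+1}-w_j| = |F_n(w_j)-w_j|\le \frac{2}{B_n^2\,\Im w_j}\Bigl[L_\sigma(B_n\Im w_j)+\sigma(\mathbb{R})+\int_{-\infty}^{\infty}\frac{|t|^3 B_n\Im w_j}{t^2+B_n^2(\Im w_j)^2}\,d\sigma(t)\Bigr].
\]

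Next I would estimate the right-hand side uniformly. Using that $L_\sigma$ is slowly varying (hence eventually monotone-equivalent to a non-decreasing function via (\textbf{P4}), or just using the uniform convergence in Karamata's theorems) together with \eqref{eq:4.1}, the bracket divided by $B_n^2\,\Im w_j$ is $O(1/n)$ as $n\to\infty$, uniformly for $\Im w_j$ in any range bounded below by a positive constant: the term $nB_n^{-2}[L_\sigma(B_n\Im w_j)+\sigma(\mathbb{R})]$ tends to $1$ for each fixed $\Im w_j>0$ and is uniformly bounded for $\Im w_j\ge 9$ by slow variation, and the cubic term is $o(1)$ relative to $L_\sigma(B_n\Im w_j)$ by Proposition 2.1(2). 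Hence there is $N=N(\mu)$ so that for $n\ge N$ one has $|F_n(z)-z|\le 10/n$ for every $z$ with $\Im z\ge 9$ and $|\Re z|<\Im z$. Feeding this back, $|w_{j+1}-w_j|\le 10/n$, and summing the telescoping inequality $|w_j-iy|\le\sum_{i=0}^{j-1}|w_{i+1}-w_i|\le 10j/n$ closes the induction, provided one checks at each stage that $w_j$ indeed satisfies $\Im w_j\ge 9$ (immediate, since $\Im w_j\ge y>10$) and $|\Re w_j|<\Im w_j$ (which follows from $|\Re w_j| = |\Re(w_j-iy)|\le |w_j-iy|\le 10j/n\le 10 < y\le \Im w_j$).

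For the tightness claim: taking $j=n$ in \eqref{eq:4.3} gives $|F_n^{\circ n}(iy)-iy|\le 10$ for $n\ge N$ and all $y>10$, and since $F_{\mu_n} = F_{D_{1/B_n}\mu^{\triangleright n}}$ satisfies $F_{\mu_n}(iy) = F_n^{\circ n}(iy)$ (because $F_{D_{1/B_n}(\mu^{\triangleright n})} = F_{D_{1/B_n}\mu}^{\circ n}$ by the composition rule $F_{\mu\triangleright\nu} = F_\mu\circ F_\nu$ together with $D_{1/B_n}(\mu\triangleright\nu)=D_{1/B_n}\mu\triangleright D_{1/B_n}\nu$), we get $F_{\mu_n}(iy) = iy(1+o(1))$ uniformly in $n\ge N$ as $y\to\infty$, and the finitely many remaining $\mu_n$ with $n<N$ are trivially a tight family; then the tightness criterion \eqref{eq:2.2} applies.

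The main obstacle I anticipate is the uniformity in the estimate of the right-hand side of \eqref{eq:4.2} — one must be careful that the $o(1)$ and the convergence to $1$ in \eqref{eq:4.1} hold uniformly as the argument $\Im w_j$ ranges over an unbounded set (it can grow with $j$, since $\Im$ is only monotone, not controlled above). This is handled by slow variation: $L_\sigma(B_n s)/L_\sigma(B_n)\to 1$ uniformly for $s$ in compact subsets of $(0,\infty)$, but here $s=\Im w_j$ need not stay bounded. The clean way around this is to note that $|F_n(z)-z|$ is itself decreasing in $\Im z$ along the imaginary-axis-like region (each term in the bracket, divided by $\Im z$, is eventually decreasing), so it suffices to bound it at the smallest relevant height $\Im z = y > 10$, reducing everything to the single uniform estimate "$|F_n(z)-z|\le 10/n$ for $\Im z\ge 10$, $|\Re z|\le \Im z$, $n\ge N$", which follows directly from \eqref{eq:4.1} and Proposition 2.1(2) applied at a fixed height.
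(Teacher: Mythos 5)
Your overall skeleton is the same as the paper's: a uniform one-step estimate $|F_n(z)-z|\leq 10/n$ on a truncated cone, a telescoping induction that keeps the iterates in the cone (using $\Im F_n(z)\geq\Im z$ and the smallness of the real part), and tightness via \eqref{eq:2.2}. That part of your argument is fine. The gap is exactly at the point you yourself flag: the uniformity of the one-step bound over all heights $y>10$. Your intermediate claim that $nB_n^{-2}\left[L_\sigma(B_n\Im w_j)+\sigma(\mathbb{R})\right]$ is ``uniformly bounded for $\Im w_j\geq 9$ by slow variation'' is false in the infinite-variance case: for each fixed $n$ the quantity $L_\sigma(B_n s)$ tends to $\infty$ as $s\to\infty$, and uniform convergence in Karamata's theorem only holds for $s$ in compact sets. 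Your proposed repair --- that the right-hand side of \eqref{eq:4.2} is decreasing in $\Im z$, so it suffices to bound it at height $10$ --- is asserted, not proved, and it is not term-by-term obvious: the dominant term is $L_\sigma(B_n v)/(B_n^2 v)$, whose integrand $t^2v/(t^2+B_n^2v^2)$ is \emph{increasing} in $v$ whenever $|t|>B_nv$, so monotonicity of the integral requires an argument. (It can in fact be rescued: eventual monotonicity of $L_\sigma(x)/x$ follows from a tail estimate of the type $x^2\sigma(\{|t|>x\})=o(L_\sigma(x))$, which one can extract from Proposition 2.1(2); but that is real work your proposal does not do, and without it the induction never gets off the ground, since the bound must hold at the points $w_j$ whose imaginary parts are unbounded above.)

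For comparison, the paper resolves this uniformity issue differently and more cheaply, via the Representation Theorem (\textbf{P1}): choosing $|\varepsilon(t)|\leq 1$ for large $t$ gives the Potter-type bound $L_\sigma(B_ny)+\sigma(\mathbb{R})\leq 2y\left[L_\sigma(B_n)+\sigma(\mathbb{R})\right]$ for all $y>10$ and $n$ large, and the linear factor $y$ is exactly cancelled by the prefactor $1/\Im z$ in \eqref{eq:4.2}; combining this with \eqref{eq:4.1} at the single point $y=1$ and with Proposition 2.1(2) for the cubic term yields $|F_n(z)-z|\leq 10/n$ on all of $\Gamma_{10}$ at once. If you replace your uniform-boundedness/monotonicity step with this (or with a proved version of your tail-estimate route), the rest of your induction and the tightness conclusion go through as written.
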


\begin{proof}
First of all, by (\textbf{P1}), there exists $N_{1}=N_{1}(\mu)$ such
that the estimate \[
\frac{L_{\sigma}\left(B_{n}y\right)+\sigma\left(\mathbb{R}\right)}{L_{\sigma}\left(B_{n}\right)+\sigma\left(\mathbb{R}\right)}=\frac{c\left(B_{n}y\right)}{c\left(B_{n}\right)}\exp\left(\int_{B_{n}}^{B_{n}y}\frac{\varepsilon(t)}{t}\,
dt\right)\leq2\exp\left(\int_{B_{n}}^{B_{n}y}\frac{1}{t}\,
dt\right)=2y\] holds for any $y>10$ and $n\geq N_{1}$. Then
\eqref{eq:4.1} shows that there is further a $N_{2}>N_{1}$ so that
\[
L_{\sigma}\left(B_{n}\right)+\sigma\left(\mathbb{R}\right)\leq\frac{5}{4}\frac{B_{n}^{^{2}}}{n},\qquad
n\geq N_{2}.\] Finally, by Proposition 2.1 (2), we can find the
desired $N>N_{2}$ such that \[
\int_{-\infty}^{\infty}\frac{\left|t\right|^{3}B_{n}y}{t^{2}+B_{n}^{^{2}}y^{2}}\,
d\sigma(t)\leq L_{\sigma}\left(B_{n}y\right)\] for any $n\geq N$ and
for $y>10$.

Combining these inequalities with \eqref{eq:4.2}, we obtain that
\begin{equation}
\left|F_{n}(z)-z\right|\leq\frac{10}{n}\label{eq:4.4}\end{equation}
for any $n\geq N$ and for any $z$ in the truncated cone \[
\Gamma_{10}=\{ x+iy\in\mathbb{C}^{+}:\,\left|x\right|<y,\, y>10\}.\]
In particular, the complex numbers $F_{n}(iy)$ lie in the cone
$\Gamma_{10}$ for $n\geq N$ and $y>10$. For such $n$'s and $y$'s we
now make use of \eqref{eq:4.4} to get\[
\left|F_{n}^{\circ2}(iy)-iy\right|\leq\left|F_{n}\left(F_{n}(iy)\right)-F_{n}(iy)\right|+\left|F_{n}(iy)-iy\right|\leq\frac{20}{n},\]
which implies further that $F_{n}^{\circ2}(iy)\in\Gamma_{10}$.
Proceeding inductively, we obtain that \[ \left|F_{n}^{\circ
j}(iy)-iy\right|\leq\left|F_{n}\left(F_{n}^{\circ(j-1)}(iy)\right)-F_{n}^{\circ(j-1)}(iy)\right|+\left|F_{n}^{\circ(j-1)}(iy)-iy\right|\leq\frac{10j}{n}\]
for any integer $0\leq j\leq n$, whence the estimate \eqref{eq:4.3}
holds.

Finally, it follows from \eqref{eq:4.3} that \[
F_{\mu_{n}}(iy)=F_{n}^{\circ n}(iy)=iy(1+o(1))\] uniformly in $n$ as
$y\rightarrow\infty$, and this establishes the tightness of
$\{\mu_{n}\}_{n=1}^{\infty}$.
\end{proof}
Next, let us recall the conjugacy functions appeared in Example 3.5:
$\psi_{1}(z)=z^{2}$ and $\psi_{2}(z)=\sqrt{z}$. We write \[
\psi_{1}\circ F_{n}\circ\psi_{2}(z)=F_{n}(\sqrt{z})^{2}=z+R\left(\psi_{2}(z)\right),\qquad z\in\mathbb{C}\setminus[0,+\infty),\]
where the function $R:\,\mathbb{C}^{+}\rightarrow\mathbb{C}$ is given
by \[
R(w)=\frac{2w}{B_{n}}\int_{-\infty}^{\infty}\frac{1+t^{2}}{t-B_{n}w}\, d\sigma(t)+\left[\frac{1}{B_{n}}\int_{-\infty}^{\infty}\frac{1+t^{2}}{t-B_{n}w}\, d\sigma(t)\right]^{2}.\]

We require the following result.
\begin{lem}
We shall have \[
\lim_{n\rightarrow\infty}\sum_{j=0}^{n-1}R\left(F_{n}^{\circ j}(iy)\right)=-2,\qquad10<y<11.\]
\end{lem}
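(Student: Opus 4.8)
The idea is to split the sum $\sum_{j=0}^{n-1} R(F_n^{\circ j}(iy))$ according to the two terms in the definition of $R$, and show that the ``square'' term is negligible while the ``linear'' term converges to $-2$. Throughout we work with $n \ge N$ (the constant from Lemma 4.1) and $10 < y < 11$, so that all the iterates $F_n^{\circ j}(iy)$ for $0 \le j \le n-1$ lie in the truncated cone $\Gamma_{10}$ and satisfy $|F_n^{\circ j}(iy) - iy| \le 10j/n \le 10$ by \eqref{eq:4.3}. In particular $\Im F_n^{\circ j}(iy)$ stays bounded between, say, $1$ and $21$ for all such $j$.

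First I would dispose of the square term. Writing $w_j = F_n^{\circ j}(iy)$, the estimate \eqref{eq:4.2} (or rather the bound $|F_n(z)-z| \le 10/n$ from \eqref{eq:4.4}, valid on $\Gamma_{10}$) gives
\[
\left| \frac{1}{B_n}\int_{-\infty}^{\infty} \frac{1+t^2}{t - B_n w_j}\, d\sigma(t) \right| = |F_n(w_j) - w_j| \le \frac{10}{n},
\]
so the square term contributes at most $|w_j| \cdot 100/n^2 \le \mathrm{const}/n^2$ per index $j$ (using $|w_j| \le |iy| + 10 \le 21$), and summing over $j$ from $0$ to $n-1$ gives a total of order $1/n \to 0$. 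Wait --- I must be careful: $R$ has a factor $2w$ in the first term and the square in the second, so the second term is $\left[\frac{1}{B_n}\int \frac{1+t^2}{t-B_n w_j}\,d\sigma\right]^2$, bounded by $(10/n)^2 = 100/n^2$; summing gives $O(1/n) \to 0$. Good.

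The heart of the matter is the linear term $\sum_{j=0}^{n-1} \frac{2 w_j}{B_n} \int_{-\infty}^{\infty} \frac{1+t^2}{t - B_n w_j}\, d\sigma(t)$. Here I would first show that replacing each $w_j$ by $iy$ is harmless: since $|w_j - iy| \le 10j/n$ and the integral expression $\frac{1}{B_n}\int \frac{1+t^2}{t-B_n w}\, d\sigma(w)$ is Lipschitz in $w$ on $\Gamma_{10}$ with a constant that (after integrating the bound coming from $\left|\frac{\partial}{\partial w}\frac{1}{t-B_n w}\right|$) is comparable to $\frac{1}{B_n^2}\big[L_\sigma(B_n\Im w) + \sigma(\mathbb{R}) + \cdots\big] = O(1/n)$ by \eqref{eq:4.1} and Proposition 2.1(2), each term changes by $O(j/n^2)$ and the total change is $O(1)\cdot n \cdot \frac{1}{n} \cdot \frac{1}{n}$... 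I would need to check this telescoping-style bound more carefully, but the point is that the dominant contribution comes from the ``frozen'' sum $\frac{2iy \cdot n}{B_n}\int_{-\infty}^{\infty}\frac{1+t^2}{t-B_n iy}\,d\sigma(t)$. Then I expand $\frac{1+t^2}{t - B_n iy}$: the real part of this integrand, integrated against $\sigma$ and multiplied by $iy$, combines to produce (after using $\frac{1}{t-B_n iy} = \frac{t + B_n iy}{t^2 + B_n^2 y^2}$) a main term $\frac{2iy \cdot n}{B_n} \cdot \frac{iyB_n}{B_n^2 y^2}\int \frac{(1+t^2)B_n^2 y^2}{t^2+B_n^2 y^2}\,d\sigma \cdot \frac{1}{B_n y}$-type expression, which simplifies to $-\frac{2n}{B_n^2}\big[L_\sigma(B_n y) + \sigma(\mathbb{R})\big](1 + o(1)) \to -2$ by \eqref{eq:4.1}; the imaginary/odd part contributes $\frac{2n}{B_n^2}\int \frac{(1+t^2)\,t}{t^2+B_n^2 y^2}\,d\sigma(t)$, which I bound using Proposition 2.1(2) (the $|t|^3$ estimate) and $L_\sigma \in R_0$ to see it is $o(n B_n^{-2} L_\sigma(B_n y)) = o(1)$.

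The main obstacle I anticipate is the error control when passing from $w_j$ back to $iy$ in the linear term: naively each of the $n$ terms differs by something like $(10j/n)\cdot(\text{Lipschitz const})$, and if the Lipschitz constant were merely $O(1/n)$ this would give a total error of $O(\sum_j j/n^2) = O(1)$, which is not good enough. The resolution should be that the relevant ``Lipschitz constant'' is really the size of $F_n'(w) - 1$ uniformly on a neighborhood of the segment, which by the Schwarz--Pick/Nevanlinna structure is again $O(1/n)$ but with an extra gain because we are comparing $F_n(w_j) - w_j$ for nearby $w_j$'s --- effectively one should estimate $\big|\frac{1}{B_n}\int\frac{1+t^2}{t-B_n w_j}\,d\sigma - \frac{1}{B_n}\int\frac{1+t^2}{t-B_n iy}\,d\sigma\big|$ directly as $|w_j - iy|\cdot\sup\big|\frac{1}{B_n}\int\frac{(1+t^2)B_n}{(t-B_n\xi)^2}\,d\sigma\big|$ and the latter supremum, by the same manipulation used to derive \eqref{eq:4.2}, is $O\!\big(\frac{1}{B_n^2}[L_\sigma(B_n\Im\xi)+\sigma(\mathbb R)+\int\frac{|t|^3 B_n}{t^2+B_n^2}\,d\sigma]\big) = O(1/n)$ uniformly --- and then one uses $\sum_{j=0}^{n-1}\frac{10j}{n}\cdot\frac{C}{n^2}$... this still looks like $O(1/n)$ only if we also exploit that for $j$ close to $n$ the bound $10j/n$ is of order $1$, giving $\sum_j \frac{j}{n}\cdot\frac{1}{n^2}\sim \frac{1}{n}\to 0$. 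So in fact the crude bound suffices; the genuinely delicate computation is the evaluation of the frozen main term and the verification that the odd part vanishes, both of which rest on \eqref{eq:4.1} together with Proposition 2.1(1)--(2).
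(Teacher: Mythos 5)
Your handling of the square term and your evaluation of the frozen main term at $iy$ are fine, but the step where you pass from the moving points $w_{j}=F_{n}^{\circ j}(iy)$ back to the fixed point $iy$ contains a genuine gap, and you half-notice it yourself. The Lipschitz constant you can actually justify for $w\mapsto\frac{1}{B_{n}}\int\frac{1+t^{2}}{t-B_{n}w}\,d\sigma(t)$ on the relevant truncated cone is of order $\frac{1}{B_{n}^{2}}\bigl[L_{\sigma}(B_{n}y)+\sigma(\mathbb{R})\bigr]\asymp\frac{1}{n}$ by \eqref{eq:4.1} --- not $O(1/n^{2})$ --- so the per-term replacement error is $O\bigl(\frac{j}{n}\cdot\frac{1}{n}\bigr)$ and the sum over $j$ is $O(1)$, which is exactly the obstruction you flag. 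The display $\sum_{j}\frac{10j}{n}\cdot\frac{C}{n^{2}}$ that you then write presupposes an extra factor $1/n$ that is never established; no Schwarz--Pick or derivative estimate supplies it, since $\bigl|\partial_{w}\,\frac{1}{B_{n}}\int\frac{1+t^{2}}{t-B_{n}w}\,d\sigma(t)\bigr|$ genuinely is of size $1/n$ there. As written, your argument only yields $\sum_{j=0}^{n-1}R(F_{n}^{\circ j}(iy))=-2+O(1)$.

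What closes the gap (and is the real content of the paper's proof) is an algebraic cancellation performed \emph{before} estimating: since $\frac{w}{t-B_{n}w}=-\frac{1}{B_{n}}+\frac{t/B_{n}}{t-B_{n}w}$, the difference between the $j$-th linear term at $w_{j}$ and at $iy$ equals $\frac{2(w_{j}-iy)}{B_{n}}\int\frac{(1+t^{2})\,t}{(t-B_{n}w_{j})(t-B_{n}iy)}\,d\sigma(t)$, whose integrand carries an extra factor of $t$. Using $|t-B_{n}w_{j}|\,|t-B_{n}iy|\geq c\,(t^{2}+B_{n}^{2}y^{2})$ on the cone, this is at most $|w_{j}-iy|\cdot\frac{C}{B_{n}^{2}}\bigl[\int\frac{|t|^{3}B_{n}y}{t^{2}+B_{n}^{2}y^{2}}\,d\sigma(t)+\int\frac{|t|B_{n}y}{t^{2}+B_{n}^{2}y^{2}}\,d\sigma(t)\bigr]$, and Proposition 2.1(2) together with $\int|t|\,d\sigma<\infty$ (the mean of $\sigma$ exists because $H_{\mu}\in R_{0}$) makes the bracket $o\bigl(L_{\sigma}(B_{n}y)+\sigma(\mathbb{R})\bigr)$; hence the per-term error is $\frac{j}{n}\cdot o(1/n)$ uniformly in $j$, and the total is $o(1)$. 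That this cancellation is indispensable is visible already for $\sigma=r\delta_{0}$: there each linear term equals $-r/B_{n}^{2}$ exactly, yet your two separately bounded pieces are each of genuine size $j/n^{2}$. The paper's Lemma 4.2 implements precisely this mechanism, comparing each term directly with $-\frac{1}{B_{n}^{2}}\bigl(L_{\sigma}(B_{n}y)+\sigma(\mathbb{R})\bigr)$ and rewriting the difference so that only integrands with $t^{3}$ and $t$ in the numerator survive, to which Proposition 2.1(2), \eqref{eq:4.3}, \eqref{eq:4.4} and \eqref{eq:4.1} apply; with that cancellation inserted your freezing-at-$iy$ variant goes through, but without it the estimate stalls at $O(1)$.
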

\begin{proof}
Fix $y\in(10,11)$. Denoting $w_{j}=F_{n}^{\circ j}(iy)$ for
$j=0,1,\cdots,n-1$, it follows from \eqref{eq:4.3} that every
$w_{j}$ is in the set $\Gamma=\{
u+iv:\,\left|u\right|<v,\,10<v<21\}$ whenever $n\geq N$. Moreover,
since $\Gamma\subset\Gamma_{10}$ the estimate \eqref{eq:4.4} shows
that \[
\sum_{j=0}^{n-1}\left[\frac{1}{B_{n}}\int_{-\infty}^{\infty}\frac{1+t^{2}}{t-B_{n}w_{j}}\,
d\sigma(t)\right]^{2}=n\cdot
O\left(\frac{1}{n^{2}}\right)=o(1)\qquad(n\rightarrow\infty).\]

Thus, we only need to prove that \[
\sum_{j=0}^{n-1}\frac{w_{j}}{B_{n}}\int_{-\infty}^{\infty}\frac{1+t^{2}}{t-B_{n}w_{j}}\, d\sigma(t)=-1+o(1)\qquad(n\rightarrow\infty).\]
By virtue of \eqref{eq:4.1}, this amounts to showing that \[
\sum_{j=0}^{n-1}\left[\frac{w_{j}}{B_{n}}\int_{-\infty}^{\infty}\frac{1+t^{2}}{t-B_{n}w_{j}}\, d\sigma(t)+\frac{1}{B_{n}^{^{2}}}\left(L_{\sigma}(B_{n}y)+\sigma\left(\mathbb{R}\right)\right)\right]=o(1)\]
as $n\rightarrow\infty$.

Note that\begin{align*}
 & \frac{w_{j}}{B_{n}}\int_{-\infty}^{\infty}\frac{1+t^{2}}{t-B_{n}w_{j}}\, d\sigma(t)+\frac{1}{B_{n}^{^{2}}}\left(L_{\sigma}(B_{n}y)+\sigma\left(\mathbb{R}\right)\right)\\
 & =\frac{1}{B_{n}^{^{2}}}\int_{-\infty}^{\infty}\left[\frac{(1+t^{2})B_{n}w_{j}}{t-B_{n}w_{j}}+\frac{t^{2}B_{n}^{^{2}}y^{2}}{t^{2}+B_{n}^{^{2}}y^{2}}+1\right]\, d\sigma(t)\\
 & =\frac{1}{B_{n}^{^{2}}}\int_{-\infty}^{\infty}\frac{t^{3}(1+B_{n}^{^{2}}y^{2})+t^{4}B_{n}w_{j}+tB_{n}^{^{2}}y^{2}}{(t^{2}+B_{n}^{^{2}}y^{2})(t-B_{n}w_{j})}\,
 d\sigma(t)\\
 & =\frac{1}{B_{n}^{^{2}}}\int_{-\infty}^{\infty}\frac{t^{3}B_{n}y}{t^{2}+B_{n}^{^{2}}y^{2}}\left[\frac{1}{B_{n}y(t-B_{n}w_{j})}+\frac{B_{n}y}{t-B_{n}w_{j}}+\frac{w_{j}t}{y(t-B_{n}w_{j})}\right]\, d\sigma(t)\\
 & \quad+\frac{1}{B_{n}^{^{2}}}\int_{-\infty}^{\infty}\frac{tB_{n}y}{t^{2}+B_{n}^{^{2}}y^{2}}\left[\frac{B_{n}y}{t-B_{n}w_{j}}\right]\, d\sigma(t).\end{align*}
Meanwhile, observe that \[
\left|\frac{B_{n}y}{t-B_{n}w_{j}}\right|\leq\frac{y}{\Im
w_{j}}=\frac{y}{\Im F_{n}^{\circ j}(iy)}\leq1\] and \[
\left|\frac{t}{t-B_{n}w_{j}}\right|\leq\sqrt{1+\left(\frac{\Re
w_{j}}{\Im w_{j}}\right)^{2}}<\sqrt{2},\] for any $n\geq N$, $0\leq
j\leq n-1$, and $t\in\mathbb{R}$. Thus, when $n$ is large enough
such that $B_{n}>1$, we have\begin{align*}
 & \sum_{j=0}^{n-1}\left|\frac{w_{j}}{B_{n}}\int_{-\infty}^{\infty}\frac{1+t^{2}}{t-B_{n}w_{j}}\, d\sigma(t)+\frac{1}{B_{n}^{^{2}}}\left(L_{\sigma}(B_{n}y)+\sigma\left(\mathbb{R}\right)\right)\right|\\
 & \leq\frac{7n}{B_{n}^{^{2}}}\int_{-\infty}^{\infty}\frac{\left|t\right|^{3}B_{n}y}{t^{2}+B_{n}^{^{2}}y^{2}}\, d\sigma(t)+\frac{n}{B_{n}^{^{2}}}\int_{-\infty}^{\infty}\frac{\left|t\right|B_{n}y}{t^{2}+B_{n}^{^{2}}y^{2}}\, d\sigma(t).\end{align*}

By \eqref{eq:4.1} and Proposition 2.1 (2), we deduce that \[
\frac{n}{B_{n}^{^{2}}}\int_{-\infty}^{\infty}\frac{\left|t\right|^{3}B_{n}y}{t^{2}+B_{n}^{^{2}}y^{2}}\,
d\sigma(t)=\left(\frac{L_{\sigma}(B_{n}y)}{L_{\sigma}(B_{n}y)+\sigma\left(\mathbb{R}\right)}\right)\cdot
o(1)=o(1)\] and \begin{eqnarray*}
\frac{n}{B_{n}^{^{2}}}\int_{-\infty}^{\infty}\frac{\left|t\right|B_{n}y}{t^{2}+B_{n}^{^{2}}y^{2}}\, d\sigma(t) & \leq & \frac{n}{B_{n}^{^{2}}}\frac{1}{B_{n}}\int_{-\infty}^{\infty}\left|t\right|\, d\sigma(t)\\
 & = & \left(\frac{1+o(1)}{L_{\sigma}(B_{n}y)+\sigma\left(\mathbb{R}\right)}\right)\cdot o(1)=o(1)\end{eqnarray*}
as $n\rightarrow\infty$, proving the lemma.
\end{proof}
We are now ready to prove one implication in Theorem 3.1.

\begin{proof}[Proof of Theorem 3.1 (2) implies (1)] For $z=-y^{2}$, $y>1$,
we have\begin{eqnarray*}
F_{\mu_{n}}(\sqrt{z})^{2} & = & \psi_{1}\circ F_{n}^{\circ n}\circ\psi_{2}(z)\\
 & = & \left(\psi_{1}\circ F_{n}\circ\psi_{2}\right)^{\circ n}(z)\\
 & = & \left(z+R\left(\psi_{2}(z)\right)\right)^{\circ n}\\
 & = & z+\sum_{j=0}^{n-1}R\left(F_{n}^{\circ j}\left(\sqrt{z}\right)\right).\end{eqnarray*}
Thus, Lemma 4.2 implies \[
\lim_{n\rightarrow\infty}F_{\mu_{n}}(iy)=\sqrt{(iy)^{2}-2},\qquad10<y<11.\]
Since $\{\mu_{n}\}_{n=1}^{\infty}$ is a tight sequence, the above
equation determines uniquely the limit function
$F_{\gamma}(z)=\sqrt{z^{2}-2}$, and hence determines uniquely the
weak limit $\gamma$ of the sequence $\{\mu_{n}\}_{n=1}^{\infty}$.
Therefore, the full sequence $\{\mu_{n}\}_{n=1}^{\infty}$ converges
to the law $\gamma$.
\end{proof}
We now consider the converse of the central limit theorem. Suppose
$\mu$ is a distribution in the set
$\mathcal{D}_{\triangleright}[\gamma]$, that is, there exist norming
constants $B_{n}>0$ for which the measures\[
\mu_{n}=D_{1/B_{n}}\mu^{\triangleright n},\qquad n\geq1,\] converge
weakly to the law $\gamma$ as $n\rightarrow\infty$. Clearly, the
measure $\mu$ must be nondegenerate. We shall prove that the
function $L_{\sigma}$ is slowly varying, for it follows from
Proposition 2.1 (1) that $H_{\mu}\in R_{0}$. \vspace{-0.1in}
\begin{proof}[Proof of Theorem 3.1 (1) implies (2)] By Remark
3.3, the sequence $B_{n}$ is of the form $B_{n}=\sqrt{n}f(n)$, where
$f$ is a slowly varying function on $(0,\infty)$. By (\textbf{P4}),
every regularly varying function with positive index is
asymptotically equivalent to a non-decreasing function at infinity.
Hence, replacing $B_{n}$ by its monotone equivalent if necessary, we
can assume that $\{ B_{n}\}_{n=1}^{\infty}$ is an increasing
sequence.

Let $\varepsilon\in(0,1/2)$ be arbitrary. The weakly convergent sequence
$\{\mu_{n}\}_{n=1}^{\infty}$ is tight. By \eqref{eq:2.2}, there
exists $\beta=\beta(\varepsilon,\mu)\geq1$ such that for every $y>\beta$,
we have \[
\left|\frac{1}{B_{j}}F_{\mu}^{\circ j}(iB_{j}y)-iy\right|=\left|F_{\mu_{j}}(iy)-iy\right|\leq\varepsilon y,\qquad j\geq1.\]
Then, since $\{ B_{n}\}_{n=1}^{\infty}$ is monotonic, we deduce,
for such $y$'s and any $n>1$, that \begin{equation}
\left|F_{\mu}^{\circ j}(iB_{n}y)-iB_{n}y\right|\leq\varepsilon B_{n}y,\qquad0\leq j\leq n-1.\label{eq:4.5}\end{equation}

We write the function $F_{\mu}$ in the form: $F_{\mu}(z)=z+a+A(z)$,
where \[ A(z)=\int_{-\infty}^{\infty}\frac{1+tz}{t-z}\,
d\sigma(t),\qquad z\in\mathbb{C}^{+}.\] It follows from
\eqref{eq:4.5} and a straightforward calculation that \[
\left|A\left(F_{\mu}^{\circ
j}(iB_{n}y)\right)-A(iB_{n}y)\right|\leq2\varepsilon\Im
A\left(F_{\mu}^{\circ j}(iB_{n}y)\right),\qquad0\leq j\leq n-1.\]
Since \[ B_{n}^{-1}\sum_{j=0}^{n-1}A\left(F_{\mu}^{\circ
j}(B_{n}z)\right)=F_{\mu_{n}}(z)-z-nB_{n}^{-1}a,\] we obtain
\begin{equation}
\left|F_{\mu_{n}}(iy)-iy-nB_{n}^{-1}a-nB_{n}^{-1}A(iB_{n}y)\right|\leq2\varepsilon\Im\left[F_{\mu_{n}}(iy)-iy\right]\label{eq:4.6}\end{equation}
for any $y>\beta$ and for $n\geq1$. This implies that
\begin{equation} (1-2\varepsilon)f_{n}(y)\leq
nyU_{\sigma}(B_{n}y)\leq(1+2\varepsilon)f_{n}(y),\label{eq:4.7}\end{equation}
where the functions $f_{n}$ and $U_{\sigma}$ are defined as \[
f_{n}(y)=\Im F_{\mu_{n}}(iy)-y,\qquad y>0,\] and \[
U_{\sigma}(x)=\int_{-\infty}^{\infty}\frac{1+t^{2}}{x^{2}+t^{2}}\,
d\sigma(t),\qquad x>0.\] We note for further reference that
$f_{n}(y)\rightarrow\sqrt{y^{2}+2}-y$ as $n\rightarrow\infty$ for
each $y>0$, and that the function $U_{\sigma}(x)$ is decreasing in
$x$. Also, both $f_{n}$ and $U_{\sigma}$ are positive functions
because $\mu$ is nondegenerate.

Observe that $x^{2}U_{\sigma}(x)\sim
L_{\sigma}(x)+\sigma(\mathbb{R})$ as $x\rightarrow\infty$.
Therefore, we need to show that the function $U_{\sigma}$ is
regularly varying with index $-2$; that is, for any fixed $c>0$ we
shall prove that \[
\lim_{x\rightarrow\infty}U_{\sigma}(x)^{-1}U_{\sigma}(cx)=c^{-2}.\]
We proceed as follows. First, since $B_{n}\leq B_{n+1}$, for any
large $x>0$ we can choose a positive integer $n=n(x,y)$ such that
\[ B_{n}y\leq x<B_{n+1}y.\] Moreover, the monotonicity of
$U_{\sigma}$ and \eqref{eq:4.7} imply
\[
\frac{1-2\varepsilon}{1+2\varepsilon}\left(\frac{n}{n+1}\right)\frac{f_{n+1}(cy)}{cf_{n}(y)}\leq\frac{U_{\sigma}(cx)}{U_{\sigma}(x)}\leq\frac{1+2\varepsilon}{1-2\varepsilon}\left(\frac{n+1}{n}\right)\frac{f_{n}(cy)}{cf_{n+1}(y)}.\]
Secondly, by letting $x\rightarrow\infty$ (hence
$n\rightarrow\infty$), we obtain that \[
\left(\frac{1-2\varepsilon}{1+2\varepsilon}\right)c^{-2}\left(\frac{y+\sqrt{y^{2}+2}}{y+\sqrt{y^{2}+2/c^{2}}}\right)\leq\liminf_{x\rightarrow\infty}\frac{U_{\sigma}(cx)}{U_{\sigma}(x)}\]
and \[
\limsup_{x\rightarrow\infty}\frac{U_{\sigma}(cx)}{U_{\sigma}(x)}\leq\left(\frac{1+2\varepsilon}{1-2\varepsilon}\right)c^{-2}\left(\frac{y+\sqrt{y^{2}+2}}{y+\sqrt{y^{2}+2/c^{2}}}\right)\]
for every $y>\beta$. Finally, by first letting $y\rightarrow\infty$
and then $\varepsilon\rightarrow0$, we have that \[
c^{-2}\leq\liminf_{x\rightarrow\infty}\frac{U_{\sigma}(cx)}{U_{\sigma}(x)}\leq\limsup_{x\rightarrow\infty}\frac{U_{\sigma}(cx)}{U_{\sigma}(x)}\leq
c^{-2},\] whence the desired result for $U_{\sigma}$ follows.
Consequently, we have $H_{\mu}\in R_{0}$.

The last thing which needs to be proved is that $m(\mu)=0$. To this
purpose, we examine the real part of $A(iB_{n}y)$ in \eqref{eq:4.6}
for $y=2\beta$. We obtain that \[
\frac{n}{B_{n}}\left|a-\int_{-\infty}^{\infty}\frac{(4B_{n}^{^{2}}\beta^{^{2}}-1)t}{4B_{n}^{^{2}}\beta^{^{2}}+t^{2}}\,
d\sigma(t)\right|\leq(1+2\varepsilon)\left|F_{\mu_{n}}(2\beta
i)-2\beta i\right|=O\left(\frac{1}{\beta}\right).\] Note that
$B_{n}=\sqrt{n}f(n)$ with $f\in R_{0}$. Since the function $f$ grows
slower than any power at infinity (see (\textbf{P2})), the above
estimate and the dominated convergence theorem imply $a=m(\sigma)$.
Hence, by Proposition 2.1 (1), the measure $\mu$ has zero
expectation.
\end{proof}

\subsection{Proof of Theorem 3.4}

By Proposition 2.1 (3), if the measure $\mu$ has finite variance,
say, $\text{var}(\mu)=b>0$, then the constants $B_{n}$ can be taken
as $B_{n}=\sqrt{n(m_{2}(\sigma)+\sigma(\mathbb{R}))}=\sqrt{nb}$ in
order to satisfy the condition \eqref{eq:4.1}. Since the measure
$\mu\triangleright\delta_{-a}$ is simply a translation of $\mu$, the
function $H_{\mu\triangleright\delta_{-a}}$ is slowly varying if and
only if the function $H_{\mu}$ is. Thus, the proof of Theorem 3.4 is
merely a word-for-word translation of the proof of Theorem 3.1; only
this time the key estimate \eqref{eq:4.2} and Lemma 4.1 are easier to obtain because $(1+t^{2})\, d\sigma(t)$ is a finite
measure.

\subsection{Proof of Theorem 3.6}

We first focus on the proof of Theorem 3.6 (1). Suppose that $\{\mu_{t}:t\geq0\}$
is a $\triangleright$-convolution semigroup, and that there is a
$t_{0}>0$ such that $\mu_{t_{0}}\in\mathcal{D}_{\triangleright}[\gamma]$.
We aim to construct a positive function $B\in R_{1/2}$ for which
the weak convergence $D_{1/B(t)}\mu_{t}\Rightarrow\gamma$ ($t\rightarrow\infty$)
holds.

To this purpose, we let $t_{n}$ be any positive sequence so that
$\lim_{n\rightarrow\infty}t_{n}=\infty$ and write \[
\nu=\mu_{t_{0}}\quad\text{and}\quad\nu_{n}=\mu_{t_{0}(t_{n}/t_{0}-[t_{n}/t_{0}])},\]
where $[x]$ denotes again the integral part of $x$. By the semigroup
property, we have \[
\mu_{t_{n}}=\mu_{t_{0}[t_{n}/t_{0}]+t_{0}(t_{n}/t_{0}-[t_{n}/t_{0}])}=\nu^{\triangleright[t_{n}/t_{0}]}\triangleright\nu_{n},\qquad n\geq1.\]

\begin{proof}[Proof of Theorem 3.6 (1)] Since
$\nu\in\mathcal{D}_{\triangleright}[\gamma]$, there exists a
positive sequence $C_{n}$ such that $D_{1/C_{n}}\nu^{\triangleright
n}\Rightarrow\gamma$ along the set of positive integers. By Remark
3.3, the function $C(x)=C_{[x]}$ belongs to the class $R_{1/2}$.

Let us define $B(x)=C(x)/\sqrt{t_{0}}$ for $x>0$. The function $B(x)$ is in the class $R_{1/2}$, and we write \begin{equation}
D_{1/B(t_{n})}\mu_{t_{n}}=\left(D_{B(t_{n}/t_{0})/B(t_{n})}D_{1/B(t_{n}/t_{0})}\nu^{\triangleright[t_{n}/t_{0}]}\right)\triangleright D_{1/B(t_{n})}\nu_{n}.\label{eq:4.8}\end{equation}
Since $\{\nu_{n}\}_{n=1}^{\infty}\subset\{\mu_{t}:0\leq t\leq t_{0}\}$,
the stochastic continuity of the semigroup $\{\mu_{t}:t\geq0\}$ implies
that the family $\{\nu_{n}\}_{n=1}^{\infty}$ is tight. Then (\textbf{P2})
shows that $D_{1/B(t_{n})}\nu_{n}\Rightarrow\delta_{0}$ as $n\rightarrow\infty$.
Now, the desired weak convergence $D_{1/B(t_{n})}\mu_{t_{n}}\Rightarrow\gamma$
follows from \eqref{eq:4.8} and \[
\lim_{n\rightarrow\infty}B(t_{n})^{-1}B(t_{n}/t_{0})=1/\sqrt{t_{0}}.\]

Finally, every $\mu_{t}$ belongs to the set $\mathcal{D}_{\triangleright}[\gamma]$
simply because $\mu_{t}^{\triangleright n}=\mu_{nt}$ for $n\geq1$.
\end{proof}
Theorem 3.6 (2) follows from a similar consideration based on the
law of large numbers\footnote{For a WLLN of non-identical summands, see the article arXiv:1304.1230 (added April 20, 2013).}  $D_{1/n}\nu^{\triangleright
n}\Rightarrow\delta_{a}$ (see Theorem 5.1 of \cite{JCLevy}). We omit
the details.

\subsection{Proof of Theorem 3.7}

Consider the marginal laws $\mu_{t}$ of a FLP2, and let $\rho$ be
the L\'{e}vy \emph{}measure of the corresponding semigroup $\{ F_{\mu_{t}}\}_{t\ge0}$.
Suppose there is a $t_{0}>0$ such that $m(\mu_{t_{0}})=0$ or $m(\mu_{t_{0}})=\infty$.

In the sequel we write $F_{t}=F_{\mu_{t}}$ for each $t\geq0$ and
denote by $\sigma_{t}$ the measure associated to the Nevanlinna
form of $F_{t}$.

By calculus, we have \begin{equation}
F_{t}(z)-z=\int_{0}^{t}\varphi\left(F_{s}(z)\right)\, ds,\qquad t\geq0,\; z\in\mathbb{C}^{+},\label{eq:4.9}\end{equation}
where the function $\varphi$ is the infinitesimal generator of $\{ F_{t}\}_{t\geq0}$
as in \eqref{eq:3.2}.

\begin{lem}
For any fixed $t>0$, we shall have \[ t^{-1}\left(\Im
F_{t}(iy)-y\right)\sim\int_{0}^{1}\Im\varphi\left(F_{s}(iy)\right)\,
ds\qquad(y\rightarrow\infty).\] When $t=0$, we have \[
\Im{\varphi(iy)}\sim\int_{0}^{1}\Im\varphi\left(F_{s}(iy)\right)\,
ds\qquad(y\rightarrow\infty).\]
\end{lem}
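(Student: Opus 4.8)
The identity \eqref{eq:4.9} already tells us that $F_t(iy)-iy=\int_0^t\varphi(F_s(iy))\,ds$, so taking imaginary parts gives
\[
\Im F_t(iy)-y=\int_0^t\Im\varphi\bigl(F_s(iy)\bigr)\,ds .
\]
The statement we must prove is therefore an assertion about the asymptotic behaviour as $y\to\infty$ of the function $s\mapsto\Im\varphi(F_s(iy))$ on the interval $[0,t]$: namely that, after dividing by $t$, the average of this function over $[0,t]$ is asymptotically the same as its average over $[0,1]$. The plan is to show that $s\mapsto\Im\varphi(F_s(iy))$ is, up to a multiplicative factor tending to $1$, essentially \emph{constant} in $s$ on any bounded $s$-interval once $y$ is large, so that both averages pick up the same leading order. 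I expect the $t=0$ case to drop out of the same estimate by letting $t\to 0^+$ in \eqref{eq:3.2}, since $\varphi(iy)=\lim_{t\to0^+}t^{-1}(F_t(iy)-iy)$ and the argument shows $\Im\varphi(iy)$ and $\int_0^1\Im\varphi(F_s(iy))\,ds$ differ only by a factor $1+o(1)$.

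\textbf{Key steps.} First I would record the Nevanlinna form \eqref{eq:3.3} of $\varphi$, giving
\[
\Im\varphi(z)=\Im z\int_{-\infty}^{\infty}\frac{1+x^2}{|x-z|^2}\,d\rho(x),
\]
and the analogous expression for $\Im\varphi(F_s(iy))$ with $z$ replaced by $w_s:=F_s(iy)$. From Proposition 2.1 (applied to $\varphi$ via its representation, or directly from the subordination identity $\mu_t=\mu_s\triangleright\sigma_{s,t}$) and the hypothesis $m(\mu_{t_0})\in\{0,\infty\}$, one knows $\Im w_s\ge y$ and, more importantly, that $\Im w_s = y(1+o(1))$ as $y\to\infty$ \emph{uniformly for $s$ in a bounded interval}: indeed the $\triangleright$-convolution semigroup is stochastically continuous, so $\{\sigma_{0,s}:0\le s\le t\}$ is tight, hence by \eqref{eq:2.2} the defect $\Im F_s(iy)-y$ is $o(y)$ uniformly in $s\in[0,t]$. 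Second, I would feed this uniform estimate $w_s=iy(1+o(1))$ into the integrand: for each fixed $x$, $\frac{1+x^2}{|x-w_s|^2}$ is comparable (ratio $\to 1$ uniformly in $s$, once we also control $|\Re w_s|/\Im w_s\to 0$, again from tightness) to $\frac{1+x^2}{x^2+y^2}$, which does not depend on $s$. Hence
\[
\Im\varphi(w_s)=\bigl(1+o(1)\bigr)\,y\int_{-\infty}^{\infty}\frac{1+x^2}{x^2+y^2}\,d\rho(x)=\bigl(1+o(1)\bigr)\,yU_\rho(y),
\]
with the $o(1)$ uniform in $s\in[0,t]$. Third, integrating this over $[0,t]$ versus $[0,1]$ gives $\int_0^t\Im\varphi(w_s)\,ds=(1+o(1))\,t\,yU_\rho(y)$ and $\int_0^1\Im\varphi(w_s)\,ds=(1+o(1))\,yU_\rho(y)$, and dividing yields the claim. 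The $t=0$ case follows since $\Im\varphi(iy)=y\,U_\rho(y)$ exactly from the representation, matching the same leading term.

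\textbf{Main obstacle.} The delicate point is the \emph{uniformity in $s$} of the estimates $\Im F_s(iy)=y(1+o(1))$ and $\Re F_s(iy)=o(y)$ as $y\to\infty$. Pointwise in $s$ this is immediate from tightness of each individual $\sigma_{0,s}$, but to integrate we genuinely need it uniformly over the compact interval $[0,t]$; I would obtain this from \eqref{eq:2.2} applied to the tight family $\{\sigma_{0,s}:0\le s\le t\}$ (tightness of this family being exactly what stochastic continuity of the semigroup buys us, cf.\ Remark 3.8 and the preliminary discussion of tightness via reciprocal Cauchy transforms). A secondary subtlety is interchanging the $y\to\infty$ limit with the $s$-integral when passing from the uniform integrand estimate to the integrated one; this is harmless because the $o(1)$ is uniform, so one may simply bound $\bigl|\Im\varphi(w_s)-yU_\rho(y)\bigr|\le \varepsilon(y)\,yU_\rho(y)$ with $\sup_{s\in[0,t]}\varepsilon(y)\to 0$, and integrate that inequality directly. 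No dominated-convergence machinery beyond this uniform bound is needed.
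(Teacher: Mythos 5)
Your proof is correct and takes essentially the same route as the paper: both start from \eqref{eq:4.9}, use the Nevanlinna form of $\varphi$, and rely on the uniform estimate $F_{s}(iy)=iy(1+o(1))$ on compact time intervals (tightness of $\{\mu_{s}\}$ plus \eqref{eq:2.2}) to get a relative error $\varepsilon(y)\to0$ uniform in $s$. The only cosmetic difference is that you compare each $\Im\varphi\left(F_{s}(iy)\right)$ with the fixed reference $\Im\varphi(iy)=y\int_{-\infty}^{\infty}\frac{1+x^{2}}{x^{2}+y^{2}}\,d\rho(x)$, whereas the paper rescales time and compares $\Im\varphi\left(F_{ts}(iy)\right)$ with $\Im\varphi\left(F_{s}(iy)\right)$ pairwise before integrating in $s$.
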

\begin{proof}
Let $t\geq 0$ be given. By a change of variable in \eqref{eq:4.9},
we need to show that
\[ \int_{0}^{1}\Im\varphi\left(F_{ts}(iy)\right)\,
ds\sim\int_{0}^{1}\Im\varphi\left(F_{s}(iy)\right)\,
ds\qquad(y\rightarrow\infty).\] Indeed, denoting
$d\nu(x)=(1+x^{2})\, d\rho(x)$ as in the \emph{}Nevanlinna
representation \eqref{eq:3.3} of the generator $\varphi$, the
estimate
\begin{eqnarray*}
\left|\Im\varphi(F_{ts}(iy))-\Im\varphi(F_{s}(iy))\right| & \leq & \int_{-\infty}^{\infty}\left|\Im\frac{1}{x-F_{ts}(iy)}-\Im\frac{1}{x-F_{s}(iy)}\right|\, d\nu(x)\\
 & \leq & \int_{-\infty}^{\infty}\frac{\left|F_{ts}(iy)-F_{s}(iy)\right|}{\left|x-F_{ts}(iy)\right|\left|x-F_{s}(iy)\right|}\, d\nu(x)\\
& \leq & \varepsilon(y)\Im\varphi(F_{s}(iy))\end{eqnarray*}
holds for all $s\in[0,1]$ and $y>0$, where the bound \[
\varepsilon(y)=\sup_{0\leq
s\leq1}\left[y^{-1}\left|F_{ts}(iy)-F_{s}(iy)\right|\left(1+y^{-1}\left|F_{ts}(iy)-F_{s}(iy)\right|\right)\right].\]
Also, since the family $\{\mu_{u}\}_{0\leq u\leq T}$ is tight for
any finite time $T>0$, \eqref{eq:2.2} implies \[
\lim_{y\rightarrow\infty}\varepsilon(y)=0.\] Therefore, by
integrating the above estimate with respect to $s$, we get \[
\left|\int_{0}^{1}\Im\varphi(F_{ts}(iy))\,
ds-\int_{0}^{1}\Im\varphi(F_{s}(iy))\,
ds\right|\leq\varepsilon(y)\int_{0}^{1}\Im\varphi(F_{s}(iy))\, ds,\]
whence the desired result follows.
\end{proof}
We now prove Theorem 3.7.

\begin{proof}[Proof of Theorem 3.7 (1)] Assume the contrary that we can find a
$t>0$ such that $H_{\mu_{t}}\in R_{0}$.

Proposition 2.1 then shows that $L_{\sigma_{t}}\in R_{0}$ or
$L_{\sigma_{t}}=0$. In addition, by Lemma 4.3, we have \[
t_{0}^{-1}\left(\Im F_{t_{0}}(iy)-y\right)\sim t^{-1}\left(\Im
F_{t}(iy)-y\right)\qquad(y\rightarrow\infty),\] or, in other words,
\[ \int_{-\infty}^{\infty}\frac{(1+x^{2})y^{2}}{x^{2}+y^{2}}\,
d\sigma_{t_{0}}(x)\sim\frac{t_{0}}{t}\int_{-\infty}^{\infty}\frac{(1+x^{2})y^{2}}{x^{2}+y^{2}}\,
d\sigma_{t}(x)\qquad(y\rightarrow\infty).\] If $L_{\sigma_{t}}=0$,
then the measure $\sigma_{t_{0}}$ has finite second moment. Hence,
$m_{2}(\mu_{t_{0}})<\infty$ by Proposition 2.1. If
$L_{\sigma_{t}}\in R_{0}$, then so does the function
$L_{\sigma_{t_{0}}}$. Thus, by Proposition 2.1 again, we have
$H_{\mu_{t_{0}}}\in R_{0}$.

Clearly, the above conclusion gives a contradiction in the case
$m(\mu_{t_{0}})=\infty$. On the other hand, if $m(\mu_{t_{0}})=0$,
then Theorems 3.1 and 3.6 imply that
$D_{1/B(s)}\mu_{s}\Rightarrow\gamma$ as $s\rightarrow\infty$ for
some function $B(s)>0$. We know from Remark 3.8 that every marginal
law in a free additive process is $\boxplus$-infinitely divisible.
So, in this case the law $\gamma$ must be $\boxplus$-infinitely
divisible for being a weak limit of $\boxplus$-infinitely divisible
measures. This is, however, a contradiction because one can verify
that the inverse of the function $F_{\gamma}$ (relative to
composition) cannot be extended analytically to $\mathbb{C}^{+}$
(cf. Theorem 5.10 of \cite{BVunbdd}). Therefore, none of the
functions $H_{\mu_{t}}$ shall be slowly varying, proving Theorem 3.7
(1).
\end{proof}
\begin{proof}[Proof of Theorem 3.7 (2)] The second part of Lemma 4.3
implies \[
\int_{-\infty}^{\infty}\frac{(1+x^{2})y^{2}}{x^{2}+y^{2}}\,
d\rho(x)\sim\int_{-\infty}^{\infty}\frac{(1+x^{2})y^{2}}{x^{2}+y^{2}}\,
d\sigma_{1}(x)\qquad(y\rightarrow\infty).\] Since the function
$H_{\mu_{1}}$ is not slowly varying, the function $L_{\sigma_{1}}$
is not slowly varying neither. Thus, the above asymptotic
equivalence shows that the function $H_{\rho}$ cannot be slowly
varying.
\end{proof}

\subsection{Proof of Theorem 3.9}

We begin by noticing that the measure $\mu$ is also in the set
$\mathcal{D}_{\triangleright}[\gamma]$, and that we may (and do)
assume $\mu_{n}=D_{1/B_{n}}\mu^{\triangleright n}\Rightarrow\gamma$.
In view of Aaronson's condition \eqref{eq:3.4}, we seek for a better
control on the summands $-\Im G_{\mu^{\triangleright n}}(z)$. We
will do this for $z=i$.

\begin{proof}[Proof of Theorem 3.9] Since $\gamma$ is Lebesgue
absolutely continuous, Theorem 3.1 implies \[
\lim_{n\rightarrow\infty}\mu_{n}([-1,1])=\gamma([-1,1])=1/2.\] In
other words, one has \[ \mu^{\triangleright
n}([-B_{n},B_{n}])\sim1/2\qquad(n\rightarrow\infty).\] Also, it is
easy to see that \begin{eqnarray*}
\sum_{n=1}^{\infty}\Im\frac{-1}{F_{\mu}^{\circ n}(i)} & = & \sum_{n=1}^{\infty}-\Im G_{\mu^{\triangleright n}}(i)\\
 & = & \sum_{n=1}^{\infty}\int_{-\infty}^{\infty}\frac{1}{1+t^{2}}\, d\mu^{\triangleright n}(t)\\
 & \geq & \sum_{n=1}^{\infty}\int_{\left|t\right|\leq B_{n}}\frac{1}{1+t^{2}}\, d\mu^{\triangleright n}(t)\geq\sum_{n=1}^{\infty}\frac{1}{1+B_{n}^{^{2}}}\mu^{\triangleright n}([-B_{n},B_{n}]).\end{eqnarray*}
Clearly, if the sequence $B_{n}^{-1}$ is not square summable, then
\[
\sum_{n=1}^{\infty}\Im\frac{-1}{F_{\mu}^{\circ n}(i)}=\infty.\]
Therefore, the boundary restriction of $F_{\mu}$ is conservative.
\end{proof}

\begin{acknowledgement*}
\emph{The author would like to thank Professors Hari Bercovici and
Richard Bradley for stimulating communication, especially to
Professor Bradley for a discussion on the norming constants in the
central limit theorem. He is also indebted to Professor Raj
Srinivasan for his support during the course of this investigation.
The author was supported by an NSERC Canada Discovery Grant and a
University of Saskatchewan New Faculty Startup Grant.}
\end{acknowledgement*}

\end{document}